\newtheorem{theorem}{Theorem}
\newtheorem{case}{Case}
\newtheorem{definition}[theorem]{Definition}
\newtheorem{proposition}[theorem]{Proposition}
\newenvironment{proof}[1][Proof]{\noindent\textbf{#1.} }{\ \rule{0.5em}{0.5em}}
\begin{document}

\title{Constants of de Bruijn-Newman type in analytic number theory and
statistical physics}
\author{Charles M. Newman \thanks{Courant Institute, New York University and NYU Shanghai} and Wei Wu \thanks{University of Warwick}}

\maketitle

\begin{abstract}
One formulation in 1859 of the Riemann Hypothesis (RH) was that the Fourier transform
$H_f(z)$ of $f$ for $ z \in \mathbb{C}$ has only real zeros when $f(t)$ is a
specific function $\Phi (t)$. P\'{o}lya's 1920s approach to RH extended $H_f$ to
$H_{f,\lambda}$, the Fourier transform of $e^{\lambda t^2} f(t)$. We review 
developments of this approach to RH and related ones in statistical physics where
$f(t)$ is replaced by a measure $d \rho (t)$. P\'{o}lya's work together with 
1950 and 1976 results of de Bruijn and Newman, respectively, imply the existence
of a finite constant $\Lambda_{DN} = \Lambda_{DN} (\Phi)$ in $(-\infty, 1/2]$
such that $H_{\Phi,\lambda}$ has only real zeros if and only if 
$\lambda \geq  \Lambda_{DN}$; RH is then equivalent to $\Lambda_{DN} \leq 0$.
Recent developments include the Rodgers and Tao proof of the 1976 conjecture
that $\Lambda_{DN} \geq 0$ (that RH, if true, is only barely so) and the
Polymath~15 project improving the $1/2$ upper bound to about~$0.22$.  
We also present examples of $\rho$'s with differing $H_{\rho,\lambda}$ and
$\Lambda_{DN} (\rho)$ behaviors; some of these are new and based on a recent weak
convergence theorem of the authors.

\end{abstract}

\section{Introduction}
\label{sec:intro}

For $\rho$ a positive Borel measure on the real line with $\rho(-\infty,+\infty)<\infty$
and $\lambda$ real, define

\begin{equation}
H_{\rho,\lambda }\left( z\right) \, := \, \int_{-\infty }^{\infty }
e^{izt} e^{\lambda
t^{2}} \,d\rho(t).  
\label{Hrho}
\end{equation}%
When $\rho$ has a density $f(t)$ with respect to Lebesgue measure, this becomes

\begin{equation}
H_{f,\lambda }\left( z\right) \, := \, \int_{-\infty }^{\infty }
e^{izt} e^{\lambda
t^{2}} f(t)  \,dt .  
\label{Hf1}
\end{equation}%
Depending on the behavior of $d \rho$ (or $f$) as $t \to \pm \infty$, 
$H_{\rho,\lambda }\left( z\right)$ will be an entire function of complex $z$
either for all real $\lambda$ or for a semi-infinite interval
of $\lambda$ including at least $(-\infty,0)$. The focus of this paper is on the
subset ${\cal P}_\rho$ (or ${\cal P}_f$) of those $\lambda$ such that
$H_{\rho,\lambda }$ (or $H_{f,\lambda }$) is entire and all its 
zeros in the complex plane 
are purely real. 

The main interest in this question, from analytic number theory, is its
relation to the Riemann Hypothesis (RH). Taking $f$ to be a specific function 
$\Phi$ (see~(\ref{Phi}) below), RH is equivalent to all the zeros of 
$H_{\Phi,0 }$ being real --- i.e., equivalent to ${\cal P}_\Phi \ni 0$.
In Section~\ref{sec:history} we review, following the history from 1859 
\cite{Rie} to the present \cite{RT}, some of the
main developments of this approach to RH, including 
(see Subsections \ref{subsec:statmech}--\ref{subsec:newman}) those
with motivations arising from statistical physics.
Three key facts are 
\begin{enumerate}
\item $\lambda \in {\cal P}_\Phi$ and $\lambda' > \lambda$ imply 
$\lambda' \in {\cal P}_\Phi$; this is due to P\'{o}lya \cite{Pol2} --- see
Theorem~\ref{UF}.

\item $1/2 \in {\cal P}_\Phi$; this is due to De Bruijn \cite{DB} --- see
Theorem~\ref{Gausssuf}.

\item ${\cal P}_\Phi$ is bounded below; this is due to Newman \cite{New1} --- see
Theorem~\ref{finite}.
\end{enumerate}

It follows  from these three facts that there is a well-defined constant
in $(-\infty, 1/2]$, now known as the de Bruijn-Newman constant, which
(for the case $f = \Phi$) we denote simply as $\Lambda_{DN}$, so that
${\cal P}_\Phi = [\Lambda_{DN},\infty)$ and { \it RH is equivalent to 
$\Lambda_{DN} \leq 0$\/}. In \cite{New1}, the proof that $\Lambda_{DN} > -\infty$
(i.e., fact number~3 above) was accompanied by the conjecture, complementary
to RH, that $\Lambda_{DN} \geq 0$, stated there as a {\it
``quantitative dictum that the Riemann Hypothesis, if true, is only barely so.''}

In Subsection~\ref{subsec:newman} we survey the history, starting
with \cite{CNV}, of improvements on the \cite{New1} lower bound
$\Lambda_{DN} > -\infty$ culminating with the 2018 proof by Rodgers
and Tao \cite{RT} of the 1976 conjecture that  $\Lambda_{DN} \geq 0$.
Then in Subsection~\ref{subsec:KKL} we discuss the shorter history
of improvements to the \cite{DB} upper bound $\Lambda_{DN} \leq 1/2$, consisting
of the Ki-Kim-Lee \cite{KKL} bound $\Lambda_{DN} < 1/2$ (and related results
such as Theorem~\ref{nonreal} below) 
as well as current efforts (see \cite{Poly15}) by Tao and collaborators to 
obtain a concrete upper bound in $(0, 1/2)$, expected at the time of this
writing to be about $0.22$. 

In Section~\ref{sec:general} of this paper, we go beyond the RH case of
$d \rho = \Phi(x) dx$ and discuss a number of results and examples 
about ${\cal P}_\rho$ or ${\cal P}_f$ for more general $\rho$ and~$f$. 
This is natural from the statistical or mathematical physics point of
view and may be of general mathematical interest even without
an immediate connection to analytic number theory. 
As in the case of $\Phi$, we will always assume that $\rho$ and $f$ are
non-negative and even; sometimes $\rho$ will be normalized
to be a probability measure. 

A key result (see Theorem~\ref{weak}) discussed in 
Subsection~\ref{subsec:weakconverge} is a somewhat surprising weak convergence theorem
from \cite{NW} for sequences of probability distributions whose Fourier transforms
have only purely real zeros. The surprising aspect of this theorem is that
the purely real zeros property somehow controls tail behavior of the sequence 
of distributions in a uniform way. We make use of this theorem several times
in Section~\ref{sec:general}, including in Subsection~\ref{subsec:classify}
where we classify and give examples of measures $\rho$ with varying tail
behaviors and varying ${\cal P}_\rho$'s. The tail behaviors are subdivided in the
three subsections of \ref{subsec:classify} according to whether 
\begin{equation}
{\cal T}_{\rho} := \{ b \in (-\infty,+\infty): \, \int e^{bx^2} d \rho < \infty\}
\label{eqn:tailset}
\end{equation}
is $(-\infty, \infty)$ or $(-\infty,b_0)$ or $(-\infty,b_0]$. 

All possibilities for the pair $({\cal T}_{\rho}, {\cal P}_\rho)$ are
presented in Subsection~\ref{subsec:classify} in nine classes, labelled
as Cases \ref{firstcase} through~\ref{lastcase}. 
Concrete examples of a $\rho$ for each class are provided,
with the notable exception of Case~\ref{nexttolastcase} 
where it is shown that one cannot
have ${\cal T}_{\rho} = (-\infty, b_0]$ and ${\cal P}_\rho$ strictly bigger
than $\{b_0\}$; this is an interesting consequence
of the weak convergence result, Theorem~\ref{weak}.
Another consequence of Theorem~\ref{weak} is the construction in
Case~\ref{case3} of a $\rho$ with ${\cal T}_{\rho} = (-\infty,\infty)$ but
with ${\cal P}_\rho = \emptyset$ --- see in particular Proposition~\ref{case3prop} 
there. 

We conclude this introduction by noting that there are other
connections between the Riemann zeta function and statistical physics
than those we discuss. For example, random matrix theory seems to describe
very accurately the distribution of the zeros (as stated by Montgomery's conjecture \cite{Mon}) and the
fine asymptotics of the moments; see for example \cite{BK} for a review. More
recently, Fyodorov and Keating \cite{FK} also suggested that the extreme values of
the zeta function in short intervals behave as a log-correlated Gaussian field. See \cite{ABB,
Naj, AOR} for recent progress on these questions.

\section{History of the de Bruijn-Newman constant} 
\label{sec:history}

\subsection{ The Riemann Hypothesis and zeros of Fourier transforms}

In his fundamental paper \cite{Rie}, Riemann extended into the complex plane
the  $\zeta $
function, defined originally by Euler as 
\[
\zeta \left( s\right) =\sum_{n\geq 1}n^{-s} 
\]%
for real $s>1$.  
Obviously, $\zeta $ is holomorphic in the domain $\left\{ \Re{s}>1\right\} $%
. The $\zeta $ function is closely related to the distribution of primes,
through Euler's product expansion, $\sum_{n\geq 1}n^{-s}=\prod_{p}\left(
1-p^{-s}\right) ^{-1}$, over primes $p \geq 2$. A main achievement 
of \cite{Rie} is that the
function $\zeta $ admits an analytic continuation to the whole complex
plane (except for a simple pole at $s=1$), 
and the location of the zeros of $\zeta $ are closely related to
the asymptotic behavior as $x \to \infty$ of $\pi \left( x\right) 
$, the number of primes less than or equal to $x$.

We now sketch some of the basic ideas of \cite{Rie}. Starting with the 
identity for the Gamma function $\Gamma \left( z\right) $, 
\[
\int_{0}^{\infty }\exp \left( -\pi n^{2}x\right) x^{\frac{s}{2}-1}dx=\frac{%
\Gamma \left( s/2\right) }{\pi ^{\frac{s}{2}}n^{s}}\text{, \ \ }\Re{s}>0%
\text{, }n\in \mathbb{N}\text{,} 
\]%
one obtains%
\[
\pi ^{-\frac{s}{2}}\Gamma \left( \frac{s}{2}\right) \zeta \left( s\right)
=\int_{0}^{\infty }\psi \left( x\right) x^{\frac{s}{2}-1}dx, 
\]%
where 
\[
\psi \left( x\right) =\sum_{n=1}^{\infty }\exp \left( -\pi n^{2}x\right) . 
\]%
Notice that $\theta(x) :=2\psi(x)+1$ is the well-known Jacobi-theta function, which satisfies the identity 
\begin{equation}
\theta \left( x\right) =x^{-\frac{1}{2}}\theta \left( x^{-1}\right) .
\label{psiidentity}
\end{equation}
The identity follows from the Poisson summation formula, which gives
\begin{multline*}
\theta \left( x\right)  = \sum_{n=-\infty}^{\infty }\exp \left( -\pi n^{2}x\right)
= \sum_{k=-\infty}^{\infty } \int_{-\infty}^{\infty } e^{ -\pi t^{2}x}
e^{-2\pi i kt} \, dt \\
= \sum_{k=-\infty}^{\infty } \frac{ e^{ -\pi k^{2}/x}}{\sqrt{x}}
=x^{-\frac{1}{2}}\theta \left( x^{-1}\right).
\end{multline*}
Riemann used \eqref{psiidentity} to obtain the representation  
\begin{equation}
\pi ^{-\frac{s}{2}}\Gamma \left( \frac{s}{2}\right) \zeta \left( s\right) =%
\frac{1}{s\left( s-1\right) }+\int_{1}^{\infty }\psi \left( x\right) \left(
x^{\frac{s}{2}-1}+x^{\frac{1-s}{2}-1}\right) dx\text{, }\Re{s}>1.
\label{zetarep}
\end{equation}%
Note that the integral above is uniformly convergent on compact subsets of $%
\mathbb{C}$, and therefore the left hand side admits an analytic
continuation in $\mathbb{C\setminus }\left\{ 0,1\right\} $. The function $%
\Gamma \left( \frac{s}{2}\right) $ has a pole at $0$, and simple poles at $%
-2,-4,...$; thus the values $-2k$, $k\in \mathbb{N}$, are simple
zeros of $\zeta $, known as the trivial zeros. Moreover, $\zeta $
does not vanish at other points in the half planes $\Re{s}<0$ or $
\Re{s}>1$. All other possible zeros of $\zeta $ are in the strip $0\leq 
\Re{s}\leq 1$, called the critical strip.

Using the change of variable $s=\frac{1}{2}+iz$, Riemann also introduced the 
$\xi $ function as
\begin{equation}
\xi \left( z\right) =s\left( s-1\right) \Gamma \left( \frac{s}{2}\right) \pi
^{-\frac{s}{2}}\zeta \left( s\right) .  \label{xi}
\end{equation}%
It follows from~(\ref{psiidentity}) and the above discussion
that $\xi $ is an even entire function. Furthermore, all
possible zeros of $\xi $ lie in the strip $\left\vert \Im{z}\right\vert
\leq \frac{1}{2}$.

In \cite{Rie} Riemann conjectured that $\zeta $ has infinitely many zeros in the
critical strip, and that one can explicitly represent%
\[
\pi \left( x\right) = li\left( x\right) +\sum_{\rho \in \mathcal{Z}\text{, }%
\Im{\rho} >0}\left( li\left( x^{\rho }\right) +li\left( x^{1-\rho
}\right) \right) +\int_{x}^{\infty }\frac{dt}{\left( t^{2}-1\right) \log t}%
-\log 2\text{, }x\geq 2, 
\]%
where $li(x): = \int_1^x (\ln t)^{-1} \,dt$ (with the integral defined at $t=1$ in the principal value sense) 
and $\mathcal{Z}$ is the set of zeros
of $\zeta $ in the critical strip. This formula was later rigorously
established by von Mangoldt. Riemann further conjectured that the zeros of
the $\xi $ function are all purely imaginary (or equivalently, all
nontrivial zeros of $\zeta $ are one the line $\Re{s}=\frac{1}{2}$).
This is the celebrated Riemann Hypothesis.

Using (\ref{zetarep}), (\ref{xi}), and a change of variables, we have 
\[
\xi \left( z\right) =2\int_{0}^{\infty }\Phi \left( u\right) \cos zu\,du, 
\]%
where 
\begin{equation}
\Phi \left( u\right) =\sum_{n\geq 1}\left( 4\pi ^{2}n^{4}e^{9u/2}-6\pi
n^{2}e^{5u/2}\right) e^{-\pi n^{2}\exp \left( 2u\right) }.  \label{Phi}
\end{equation}%
It is not obvious that $\Phi \left( u\right) $ is even, but in fact it
is, as follows from~(\ref{psiidentity}). Then the $\xi $ function 
is just the Fourier transform  
of~$\Phi $:%
\[
\xi \left( z\right) =\int_{-\infty }^{\infty }e^{izu}\Phi \left( u\right)
\,du. 
\]
It is not hard to see that $\Phi(u) \geq 0$ for all~$u$; this positivity
is valid term by term on the RHS of~(\ref{Phi}) for $u\geq0$ and then
the even-ness of $\Phi$ implies positivity for $u \leq 0$.
Up to  a multiplicative constant $\Phi$ may be regarded as an even probability
density function.

Based on the considerations above, a natural approach to the Riemann
Hypothesis is to establish criteria for Fourier transforms of 
(sufficiently rapidly decreasing at $\pm \infty$) probability
densities to possess only real zeros, and to apply them to the Riemann $\xi $
function. As will be discussed later, this encouraged  
study of the distribution of zeros of Fourier transforms.

\subsection{P\'olya and universal factors}

Motivated by the Riemann Hypothesis, P\'olya systematically investigated the
question of when the Fourier transform of a function has only real roots (or
equivalently, the Laplace transform has only pure imaginary zeros).
The function~$\Phi $ related to the Riemann $\xi $ function seems
complicated, so P\'olya's starting point was to use
(relatively simple)
classes of entire functions whose zeros are all real.

In \cite{PS}, by applying a Theorem of Laguerre, P\'olya and Schur obtained
the following class of entire functions, whose zeros are all real:

\begin{theorem}
\bigskip The entire function $f\left( z\right) $ is a uniform limit of real
polynomials with only real zeros, if and only if
\begin{equation}
f\left( z\right) =bz^{n}\exp \left( -\lambda z^{2}+\kappa z\right)
\prod_{k}\left( 1-\frac{z}{b_{k}}\right) \exp \left( \frac{z}{b_{k}}%
\right) ,  \label{LP}
\end{equation}%
where $b,\kappa \in \mathbb{R}$, $n\in \mathbb{N}$, $\lambda \geq 0$, $%
b_{k}\in \mathbb{R}$, the product can be finite or infinite
and $\sum_{k} b_{k}^{-2}<\infty $.
\end{theorem}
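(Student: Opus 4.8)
I would prove the two implications separately, using ``uniform limit'' to mean locally uniform, i.e.\ uniform on compact subsets of $\mathbb{C}$.

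For the easy direction --- that any $f$ of the form~\eqref{LP} is a locally uniform limit of real polynomials with only real zeros --- the plan is to observe first that this class of limit functions is closed under products (a product of two such limits is the limit of the products) and under locally uniform limits (a diagonal argument, using that the topology of locally uniform convergence on entire functions is metrizable), so it suffices to treat the elementary factors. Here $z^{n}$ and $1-z/b_{k}$ are themselves real polynomials with only real zeros; $e^{-\lambda z^{2}+\kappa z}$ is the locally uniform limit of $\bigl(1+(\kappa z-\lambda z^{2})/m\bigr)^{m}$, whose quadratic (or, for $\lambda=0$, linear) factor has real zeros exactly because the discriminant $\kappa^{2}/m^{2}+4\lambda/m$ is non-negative --- this is the sole place where $\lambda\geq 0$ is used in this direction; $e^{z/b_{k}}$ is the limit of $(1+z/(mb_{k}))^{m}$; and the canonical product $\prod_{k}(1-z/b_{k})e^{z/b_{k}}$ is the locally uniform limit of its partial products, which is just the statement that this product converges (using $\sum_{k}b_{k}^{-2}<\infty$). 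Assembling these pieces gives~\eqref{LP}.

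For the converse, suppose $p_{n}\to f$ locally uniformly with each $p_{n}$ a real polynomial having only real zeros, and $f\not\equiv 0$. By Hurwitz's theorem every zero of $f$ is real (a limit of real zeros of the $p_{n}$) and $f$ is real on $\mathbb{R}$; let $\{b_{k}\}$ be the nonzero zeros of $f$ listed with multiplicity and $n$ the order of vanishing at the origin. Dividing by $z^{n}$ reduces to the case $f(0)\neq 0$, in which case for all large $n$ the polynomials $p_{n}$ have no zero in a fixed neighbourhood of $0$; write $p_{n}(z)=p_{n}(0)\prod_{j}(1-z/a_{n,j})$. The quantitative heart of the argument --- essentially the cited theorem of Laguerre --- is a growth estimate showing $f$ has order at most $2$. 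The ingredients are: the low-order Taylor coefficients of $\log p_{n}$ at $0$ are under control, namely $\sum_{j}a_{n,j}^{-2}=-(\log p_{n})''(0)\to -(\log f)''(0)<\infty$, so $\sum_{j}a_{n,j}^{-2}$ is bounded uniformly in $n$; and Jensen's formula, which (via the uniform bound on $\sup_{|z|=2r}|p_{n}|$) bounds the number of $a_{n,j}$ in $|z|\leq r$ and, together with the previous bound, yields $\#\{j:|a_{n,j}|\leq r\}=O(r^{2})$ uniformly in $n$; from this one gets $\log|p_{n}(z)|\leq C|z|^{2}+o(|z|^{2})$ on suitable circles, which passes to the limit. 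Granting order $\leq 2$, Hadamard's factorization theorem, together with the reality of $f$ on $\mathbb{R}$, gives
\[
f(z)=b\,z^{n}e^{-\lambda z^{2}+\kappa z}\prod_{k}\Bigl(1-\tfrac{z}{b_{k}}\Bigr)e^{z/b_{k}},\qquad b,\kappa,b_{k}\in\mathbb{R},
\]
with $\sum_{k}b_{k}^{-2}<\infty$ (this last also following directly from the uniform bound on $\sum_{j}a_{n,j}^{-2}$, since finitely many $a_{n,j}$ converge to each $b_{k}$, so $\sum_{k\leq K}b_{k}^{-2}\leq\lim_{n}\sum_{j}a_{n,j}^{-2}$ for every $K$). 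It remains to see $\lambda\geq 0$: for any real polynomial $q$ with only real zeros, $|q(iy)|$ equals $|q(0)|\prod_{j}\sqrt{1+y^{2}/a_{j}^{2}}$ (with an extra factor $|y|^{m}$ if $q$ vanishes to order $m$ at $0$), which is nondecreasing in $|y|$, so letting $n\to\infty$ gives $|f(iy)|\geq|f(i)|>0$ for $|y|\geq 1$ ($f(i)\neq 0$ since the zeros of $f$ are real); but $|f(iy)|=|b|\,|y|^{n}e^{\lambda y^{2}}\prod_{k}\sqrt{1+y^{2}/b_{k}^{2}}$ and $\sum_{k}b_{k}^{-2}<\infty$ forces $\tfrac12\sum_{k}\log(1+y^{2}/b_{k}^{2})=o(y^{2})$ (split the sum at a large index: the tail is $\leq y^{2}\sum_{k\text{ large}}b_{k}^{-2}$, the head is $O(\log y)$), so $\lambda<0$ would make $|f(iy)|\to 0$, a contradiction.

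The step I expect to be the main obstacle is the order estimate --- equivalently, ruling out an exponential factor in the Hadamard factorization of degree higher than $2$, or a zero set of genus higher than $1$ --- because the naive bound of $\log|p_{n}(z)|$ by $\deg(p_{n})$ times something is useless when $\deg(p_{n})\to\infty$ (as in $p_{n}=(1-z^{2}/n)^{n}\to e^{-z^{2}}$), so one must argue through the uniform control of $\sum_{j}a_{n,j}^{-2}$ and Jensen-type zero counting. A cleaner alternative that bypasses the explicit order estimate is to pass the inequality $\mathrm{Im}\,(p_{n}'/p_{n})(z)=-\sum_{j}y\,|z-a_{n,j}|^{-2}<0$ (for $\mathrm{Im}\,z=y>0$) to the limit, concluding that $-f'/f$ is a real-meromorphic Nevanlinna (Herglotz) function whose only poles are at the $b_{k}$; its integral representation then gives $f'/f(z)=-2\lambda z+\kappa+\sum_{k}m_{k}\bigl(\tfrac{1}{z-b_{k}}+\tfrac{b_{k}}{1+b_{k}^{2}}\bigr)$ with $\lambda\geq 0$ and $\sum_{k}m_{k}b_{k}^{-2}<\infty$ automatically, and integrating recovers~\eqref{LP} with $\lambda\geq 0$ built in --- the remaining work then being to justify that representation and to bookkeep the convergence of the rearranged product.
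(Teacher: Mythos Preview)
The paper does not give a proof of this theorem. It is quoted as a classical result of P\'olya and Schur, obtained ``by applying a Theorem of Laguerre,'' and the paper immediately moves on to use it. So there is no in-paper argument to compare your proposal against.

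As for the proposal itself, it is correct in substance and follows the classical route. One small slip: in the order estimate you appeal to Jensen's formula ``via the uniform bound on $\sup_{|z|=2r}|p_{n}|$'' to count zeros, but you have no such bound for large $r$ a priori---that is precisely what you are trying to prove. This is harmless, since the zero count $\#\{j:|a_{n,j}|\le r\}\le Cr^{2}$ follows \emph{directly} from the uniform bound $\sum_{j}a_{n,j}^{-2}\le C$ (Chebyshev), with no Jensen needed; together with the bound on $\sum_{j}1/a_{n,j}$ coming from $(\log p_{n})'(0)$ and a Cauchy--Schwarz estimate on its partial sums, the growth bound $\log|p_{n}(z)|=O(|z|^{2+\varepsilon})$ then goes through as you indicate. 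Your Herglotz/Pick alternative via $\mathrm{Im}\,(p_{n}'/p_{n})<0$ in the upper half-plane is in fact the cleaner and more standard presentation (this is essentially how the result appears in Levin's monograph on distribution of zeros), and it has the advantage you note: $\lambda\ge 0$ and $\sum_{k}b_{k}^{-2}<\infty$ fall out of the integral representation rather than requiring separate verification.
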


Functions of the form (\ref{LP}) are said to belong to the
Laguerre-P\'olya class and we write $f\in \mathcal{LP}$.
The Riemann Hypothesis is equivalent to the statement that $\xi \in 
\mathcal{LP}$. At the time, there was a hope that a complete
characterization of the functions in $\mathcal{LP}$ could lead to a proof
(or disproof) of the Riemann Hypothesis. However, the 
conditions for functions to belong to $\mathcal{LP}$ were too involved to be
applied to the Riemann $\xi $ function, and this approach was slowly abandoned.

Later in \cite{Pol2}, P\'olya investigated when entire functions of the form 
\begin{equation}
f\left( z\right) :=\int_{-\infty }^{\infty }F\left( t\right) e^{izt}\,dt
\label{FF}
\end{equation}%
have only real zeros. The complex valued function $F$ is assumed to be
locally integrable, and statisfy the conditions:

\begin{equation}
F\left( -t\right) ={\bar F} (t) \quad \forall t,  \label{PA1}
\end{equation}%
and, for all $t$, 
\begin{equation}
\left\vert F\left( t\right) \right\vert \leq A\exp \left( -\left\vert
t\right\vert ^{2+\alpha }\right) \text{, for some }A,\alpha >0\text{.}
\label{PA2}
\end{equation}

To prove his results, P\'olya introduced the notion of \textit{%
universal factors}. These are complex valued functions $\varphi \left(
t\right)$ for $t\in \mathbb{R}$, such that if 
(\ref{FF})~has
only real zeros, then the Fourier transform 
\[
\int_{-\infty }^{\infty }\varphi \left( t\right) F\left( t\right)
e^{izt}\,dt 
\]%
also has only real zeros. P\'olya obtained the following complete
characterization of universal factors.

\begin{theorem}
\label{UF}A real analytic function $\varphi \left( t\right) ,t\in \mathbb{R}$
is a universal factor if and only if its holomorphic extension $\varphi
\left( t\right) $ in $\mathbb{C}$ is such that $\varphi \left( iz\right) $
is an entire function of the form (\ref{LP}).
\end{theorem}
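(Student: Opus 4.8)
The plan is to treat the two implications separately. For sufficiency --- that $\varphi(iz)\in\mathcal{LP}$ makes $\varphi$ a universal factor --- I would first note that the property ``$\varphi$ is a universal factor'' is closed under finite products (apply the definition twice) and under locally uniform limits, provided one may pass to the limit inside $\int\varphi_n(t)F(t)e^{izt}\,dt$ and the limiting transform is not identically zero, in which case Hurwitz's theorem forces its zeros to stay real. The point that makes the limit step legitimate is that every $\varphi$ coming from $\mathcal{LP}$ grows no faster than $e^{ct^2}$, which is swamped by the decay $e^{-|t|^{2+\alpha}}$ in (\ref{PA2}); hence $\varphi F$ and the approximants remain in the class governed by (\ref{FF})--(\ref{PA2}), their transforms are entire of order strictly below $2$, and the needed locally uniform convergence follows from dominated convergence and Cauchy estimates. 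Given this, it suffices to verify that each factor appearing in the Hadamard product (\ref{LP}) for $\varphi(iz)$ is a universal factor: constants, trivially; the exponential $e^{\kappa z}$, i.e. $\varphi(t)=e^{-i\kappa t}$, which only translates the transform to $f(z-\kappa)$; the monomial $z^n$, i.e. $\varphi(t)$ a constant times $t^n$, which sends the transform to $f^{(n)}$; the linear factors $(1-z/b_k)e^{z/b_k}$, i.e. $\varphi(t)=(1+it/b_k)e^{-it/b_k}$, which reduce to the previous two via $f+b_k^{-1}f'=b_k^{-1}e^{-b_kz}\tfrac{d}{dz}\!\big(e^{b_kz}f\big)$; and the Gaussian $e^{-\lambda z^2}$ with $\lambda\ge 0$, i.e. $\varphi(t)=e^{\lambda t^2}$, which I would obtain as the locally uniform limit of the polynomials $(1-\lambda z^2/m)^m$ (all with only real zeros, being powers of products of real linear factors), thereby reducing it to the linear-factor case together with the product/limit closure above.

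The crux of the sufficiency direction is thus the monomial case $n=1$: if $f(z)=\int F(t)e^{izt}\,dt$ with $F$ as in (\ref{PA1})--(\ref{PA2}) has only real zeros, then so does $f'$. Here $f$ is real on the real axis by (\ref{PA1}) and of order $<2$, hence of genus at most $1$; its Hadamard factorization together with this reality places $f$ in the Laguerre--P\'olya class, which is classically closed under differentiation. This is where the argument leans on the external theory of $\mathcal{LP}$.

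For necessity I would probe the hypothetical universal factor $\varphi$ with explicit $F$'s whose transforms manifestly have only real zeros: truncations $\mathbf 1_{[-R,R]}(t)$, with transform $2z^{-1}\sin(Rz)$; Gaussians $e^{-\epsilon t^2}$, with transform a constant times $e^{-z^2/(4\epsilon)}$ (no zeros at all); and their modulated and combined variants. Using compactly supported $F$ one shows that $\varphi$ extends to an entire function; testing against the Gaussian family and letting $\epsilon\to 0$, together with a Phragm\'en--Lindel\"of estimate, bounds the growth of $\varphi(iz)$ and pins down the sign of its quadratic term, since faster growth, or the wrong sign of the $e^{-\lambda z^2}$ coefficient, would surface as a non-real zero of some smoothed transform $\int\varphi(t)e^{-\epsilon t^2}e^{izt}\,dt$. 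A separate argument --- concentrating a modulated Gaussian near a putative non-real zero of $\varphi(iz)$ --- shows that $\varphi(iz)$ has only real zeros. Feeding the resulting zero and growth information into the Hadamard factorization theorem, and using the reality constraint (\ref{PA1}) on $\varphi$, then yields exactly the form (\ref{LP}).

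I expect the main obstacle to be precisely the necessity direction: exhibiting test functions $F$ that are simultaneously real-rooted and, after multiplication by a ``bad'' $\varphi$, produce a transform with a provably non-real zero. Making this rigorous requires a careful saddle-point/Laplace analysis of $\int\varphi(t)F(t)e^{izt}\,dt$ as the concentration parameter degenerates, and this is where essentially all of the real content of the theorem lies. By comparison, once the building-block reduction is in place the sufficiency direction is comparatively mechanical, resting on standard properties of the Laguerre--P\'olya class.
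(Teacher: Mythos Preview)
The paper does not give a proof of Theorem~\ref{UF}; it is stated there as P\'olya's result from \cite{Pol2} and used without argument, so there is no proof in the paper against which to compare your proposal.

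For what it is worth, your sufficiency outline is the standard one and is essentially P\'olya's: reduce to the elementary factors in (\ref{LP}), use closure of the universal-factor property under products and locally uniform limits (via Hurwitz), and handle each building block by translation, differentiation, and the classical closure of $\mathcal{LP}$ under $f\mapsto f'$. The growth bound $|\varphi(t)|\le e^{ct^2}$ for $\varphi$ with $\varphi(iz)\in\mathcal{LP}$, which you invoke to keep $\varphi F$ within the class governed by (\ref{PA1})--(\ref{PA2}), is exactly the reason the limit step is legitimate.

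Your necessity sketch, as you yourself flag, is where the substantive work lies, and one step does not go through as written. Testing against compactly supported $F$ does not show that $\varphi$ extends to an entire function: the transform $\int_{-R}^{R}\varphi(t)\,e^{izt}\,dt$ is automatically entire in $z$ for \emph{any} locally integrable $\varphi$, so the real-zeros constraint it produces bears on these partial Fourier integrals as functions of $z$, not on the analytic continuation of $\varphi$ in $t$. Extracting entirety of $\varphi$ and the order bound on $\varphi(iz)$ requires a genuinely different device; P\'olya's own treatment of this direction in \cite{Pol2} is worth consulting before attempting to fill this in.
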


Taking $f\left( z\right) =e^{-\lambda z^{2}}$ in (\ref{LP}), a simple example
of a universal factor is $\varphi \left( t\right) =e^{Bt^{2}}$
with $B>0$.
In other words, if all roots of  
\begin{equation}
\int_{-\infty }^{\infty }e^{izt}F\left( t\right) \,dt  \label{e.Lap}
\end{equation}%
are real, then the same holds for $\int_{-\infty }^{\infty
}e^{izt}e^{Bt^{2}}F\left( t\right) \,dt$ for $B>0$. Another simple
example of a universal factor is $\varphi \left( t\right) =\cosh at$.

Using Theorem \ref{UF}, P\'olya obtained the following class of functions
that have only real zeros.

\begin{theorem}
\label{PRZ}Suppose that $F$ satisfies (\ref{PA1}) and (\ref{PA2}), and in
addition, has a holomorphic extension in a neighborhood of the origin. Then
the complex function 
\[
H\left( z\right) =\int_{0}^{\infty }F\left( t\right) t^{z-1}\,dt 
\]%
admits an analytic continuation as a meromorphic function in $\mathbb{C}$.
If $H$ does not have zeros in the region $\mathbb{C}\setminus (-\infty ,0]$
and $q\in \mathbb{N}$, then the entire function 
\[
\int_{-\infty }^{\infty }F\left( t^{2q}\right) e^{izt}\,dt 
\]%
has only real zeros.
\end{theorem}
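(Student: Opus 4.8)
The plan is to prove the meromorphic continuation of $H$ by a routine splitting of the Mellin integral, and then to deduce the real-zeros assertion from an explicit Mellin--Barnes expansion of $\int F(t^{2q})e^{izt}\,dt$ combined with the theory of multiplier sequences (a companion of the P\'olya--Schur characterisation of $\mathcal{LP}$ recalled above).

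For the continuation, fix $\delta>0$ with $F$ holomorphic on $\{|t|<2\delta\}$, so that $F(t)=\sum_{n\ge 0}a_nt^n$ for $|t|\le\delta$, and split $H(z)=\int_0^\delta F(t)t^{z-1}\,dt+\int_\delta^\infty F(t)t^{z-1}\,dt$. The tail integral converges absolutely and locally uniformly in $z\in\mathbb{C}$ by~(\ref{PA2}), hence is entire; the head integral equals $\sum_{n\ge 0}a_n\delta^{n+z}/(n+z)$, which converges locally uniformly on $\mathbb{C}\setminus\{0,-1,-2,\dots\}$ because $|a_n|\delta^n$ decays geometrically, and so defines a meromorphic function with simple poles at the non-positive integers (residue $a_n$ at $-n$). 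Adding the two pieces gives the claimed continuation.

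For the real zeros, note first that since $\Gamma$ is zero-free with simple poles exactly at $\{0,-1,-2,\dots\}$, the hypothesis forces $L:=H/\Gamma$ to be \emph{entire} with all of its zeros real and $\le 0$; transporting~(\ref{PA2}) into growth bounds for $H$ on vertical lines (write $H(c+iy)$ as the Fourier transform of $u\mapsto F(e^u)e^{cu}$, which is bounded and rapidly decreasing) controls the order and genus of $L$ and rules out a Gaussian factor, so that $L\in\mathcal{LP}$. Now substitute the Mellin inversion $F(x)=\frac{1}{2\pi i}\int_{(c)}\Gamma(s)L(s)x^{-s}\,ds$ into $\int_{-\infty}^{\infty}F(t^{2q})e^{izt}\,dt=2\int_0^{\infty}F(t^{2q})\cos(zt)\,dt$, interchange the integrals (legitimate since $\Gamma(s)L(s)$ decays rapidly on vertical lines), and use the classical Mellin transform of the cosine together with the reflection formula; one obtains a Mellin--Barnes integral whose contour can be pushed to $+\infty$ (the integrand decays there by Stirling's formula and the $\mathcal{LP}$ structure of $L$), and collecting the residues at the poles $w=1,3,5,\dots$ of $\sec(\tfrac{\pi w}{2})$ yields
\[
\int_{-\infty}^{\infty}F(t^{2q})\,e^{izt}\,dt\;=\;C\sum_{m=0}^{\infty}(-1)^m\,\frac{\Gamma\!\big(\tfrac{2m+1}{2q}\big)\,L\!\big(\tfrac{2m+1}{2q}\big)}{(2m)!}\,z^{2m},\qquad C>0 .
\]
When $L\equiv 1$ the right-hand side is $\Xi_q(z):=\int_{-\infty}^{\infty}e^{-t^{2q}}e^{izt}\,dt$, and the general case is $\Xi_q$ with the coefficient of $z^{2m}$ multiplied by $L(\tfrac{2m+1}{2q})$. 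Since $L\in\mathcal{LP}$ has all zeros $\le 0$, the sequence $\big(L(\tfrac{2m+1}{2q})\big)_{m\ge 0}$ is a multiplier sequence (Laguerre's theorem), so it maps $\Xi_q$ — once $\Xi_q$ is known to lie in $\mathcal{LP}$ — into $\mathcal{LP}$; in particular $\int F(t^{2q})e^{izt}\,dt$ then has only real zeros.

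Everything therefore comes down to the base case $\Xi_q\in\mathcal{LP}$, i.e.\ that $\int_{-\infty}^{\infty}e^{-t^{2q}}e^{izt}\,dt$ has only real zeros — trivial for $q=1$, where it equals $\sqrt{\pi}\,e^{-z^2/4}$, but genuine for $q\ge 2$ — and this is the step I expect to be the main obstacle. I would prove it directly in the spirit of P\'olya: approximate $\Xi_q$ locally uniformly by real-rooted real polynomials (for instance by truncating the series above, or by a quadrature of $\int e^{-t^{2q}}\cos(zt)\,dt$), show via a Phragm\'en--Lindel\"of / saddle-point estimate on $\Xi_q$ in the half-plane $\Im z>0$ that no zero of the limit can be non-real, and conclude by Hurwitz's theorem and the closure of $\mathcal{LP}$ under locally uniform limits. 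An essentially equivalent route is an induction on $\deg L$: appending a factor $1+s/b$ to $L$ becomes, after integration by parts using $\frac{d}{dt}\big[F(t^{2q})e^{-t^{2q}}\big]$, the operator $\Psi\mapsto(2qb+1)\Psi+z\Psi'$ on the Fourier side, and this preserves $\mathcal{LP}$ because $(2qb+1)+z\frac{d}{dz}$ is the multiplier operator attached to the $\mathcal{LP}$ function $\big((2qb+1)+z\big)e^{z}$ — with the same base case $\Xi_q\in\mathcal{LP}$ at the bottom of the induction.
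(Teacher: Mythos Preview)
The paper is a survey and does not supply a proof of Theorem~\ref{PRZ}; it attributes the result to P\'olya and indicates only that it is obtained ``using Theorem~\ref{UF}'' (universal factors), so there is no detailed argument in the paper to compare against.

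Your treatment of the meromorphic continuation is correct and standard.

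For the real-zeros assertion, the multiplier-sequence reduction from a general $L=H/\Gamma$ to $L\equiv 1$ is sound, and your differential-operator induction step $\Psi\mapsto(2qb+1)\Psi+z\Psi'$ is correct (the stray $e^{-t^{2q}}$ in the integration-by-parts hint looks like a slip). The genuine gap is the base case: you need $\Xi_q(z)=\int_{-\infty}^{\infty}e^{-t^{2q}}e^{izt}\,dt\in\mathcal{LP}$ for $q\ge 2$, and none of your sketches establishes this. Truncating the power series of $\Xi_q$ yields polynomials whose real-rootedness is precisely the point at issue; a quadrature of the integral produces a finite sum $\sum_k a_k\cos(t_k z)$ with $a_k>0$, and such sums need not have only real zeros (e.g.\ $3+\cos z$ has all its zeros off the real line); and the inductive route still bottoms out at the same unproved $\Xi_q$. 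Observe also that the paper lists $\Xi_q\in\mathcal{LP}$ as a \emph{consequence} of Theorems~\ref{UF} and~\ref{PRZ} (see the display immediately following the statement), which suggests that P\'olya's own argument for Theorem~\ref{PRZ} does not take $\Xi_q$ as an input; your reduction runs in the opposite direction. A smaller technical point: for Laguerre's theorem to apply you need $L$ not merely in $\mathcal{LP}$ but of type~I (all zeros $\le 0$ and, in the representation~(\ref{LP}) with $\lambda=0$, a nonnegative linear exponent $\kappa$); your order estimate rules out the Gaussian factor but does not by itself determine the sign of $\kappa$.
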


By applying Theorems \ref{UF} and \ref{PRZ}, P\'olya concluded that the
entire functions 
\begin{eqnarray*}
&&\int_{-\infty }^{\infty }\cosh at\exp \left( -a\cosh t\right) e^{izt}\,dt,
\\
&&\int_{-\infty }^{\infty }\exp \left( -t^{2q}\right) e^{izt}\,dt ,
\end{eqnarray*}%
have only real zeros and so does the entire function 
\begin{equation}
\int_{-\infty }^{\infty }\exp \left( -at^{4q}+bt^{2q}+ct^{2}\right)
e^{izt}\,dt,\text{ }q\in \mathbb{N}\text{.} 
\label{Polyaexample}
\end{equation}

In another paper of P\'olya \cite{Pol3}, he explained how the existence of
infinitely many real zeros is in general easier to prove than the
non-existence of complex zeros. In particular, a method of Hardy leads to
the following criterion:

\begin{proposition}
\label{inftyzero}Suppose that $F$ is an even function, analytic and real,
and such that $\lim_{t\rightarrow \infty }F^{\left( n\right) }\left(
t\right) t^{2}=0$ for $n=0,1,2,...$ If the function $f\left( z\right) $
defined by (\ref{FF}) has only a finite number of real zeros, then there is
an integer $N$ such that $\left\vert F^{\left( n\right) }\left( it\right)
\right\vert $ is an increasing function if $n>N$ and $0<t<T$, where $iT$ is
the singular point of $F\left( t\right) $ which is closest 
to the origin.
\end{proposition}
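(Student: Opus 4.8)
The plan is to reduce the statement to a positivity property of the even-order Taylor coefficients of $F$ at the origin: I will show that if $f$ has only finitely many real zeros then $(-1)^{j}F^{(2j)}(0)>0$ for all large $j$, and then read off the asserted monotonicity directly from the resulting sign-definite power series for $t\mapsto F^{(n)}(it)$.

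The first step is to extract genuine integrability of $f$ from the (deceptively weak) hypothesis $F^{(n)}(t)t^{2}\to 0$. Since $F$ is even, $f(z)=\int_{-\infty}^{\infty}F(t)\cos(zt)\,dt$ is real and even on $\mathbb{R}$. Integrating (\ref{FF}) by parts $n$ times --- all boundary terms vanishing because $F^{(k)}(t)\to 0$ as $t\to\pm\infty$ --- gives $f(x)=(-ix)^{-n}\int_{-\infty}^{\infty}F^{(n)}(t)e^{ixt}\,dt$ for $x\neq 0$, so that $|f(x)|\le C_{n}|x|^{-n}$ for every $n$ (using that $|F^{(n)}|=o(t^{-2})$ is integrable). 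Hence $f$ is rapidly decreasing, and in particular $f\in L^{1}$ and $x^{n}f(x)\in L^{1}$ for all $n$; combined with $F\in L^{1}$, this makes the Fourier inversion formula $F(t)=\tfrac{1}{2\pi}\int_{-\infty}^{\infty}f(x)e^{-ixt}\,dx$ valid pointwise and legitimizes differentiation under the integral. Evaluating the derivatives at $t=0$ and using that $f$ is even, I obtain $(-1)^{j}F^{(2j)}(0)=\tfrac{1}{\pi}\int_{0}^{\infty}x^{2j}f(x)\,dx$.

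Next I would invoke the zero hypothesis. If $f$ has only finitely many real zeros then $f$ has constant nonzero sign on some $(x_{0},\infty)$; replacing $F$ by $-F$ if necessary --- which affects neither the hypotheses nor the conclusion, as the latter only involves $|F^{(n)}(it)|$ --- I may assume $f(x)>0$ for $x>x_{0}$. Then for large $j$ the tail dominates: $\int_{x_{0}+1}^{x_{0}+2}x^{2j}f(x)\,dx\ge c\,(x_{0}+1)^{2j}$ with $c=\int_{x_{0}+1}^{x_{0}+2}f>0$, while $\bigl|\int_{0}^{x_{0}}x^{2j}f(x)\,dx\bigr|\le x_{0}^{2j}\int_{0}^{x_{0}}|f|$ and $(1+x_{0}^{-1})^{2j}\to\infty$. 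Hence there is an integer $M$ with $(-1)^{j}F^{(2j)}(0)>0$ for every $j>M$. Finally, setting $N:=2M$ and fixing $n>N$, I would look at the Taylor series about $0$ of the analytic function $t\mapsto F^{(n)}(it)$, which converges on $|t|<T$ since the nearest singularity of $F$ is at distance $T$. Because $F$ is even, $F^{(k)}(0)=0$ for odd $k$; writing $\nu\in\{0,1\}$ for the parity of $n$, only the terms of order $\nu+2m$ survive, and each relevant derivative order $n+\nu+2m$ is even and exceeds $2M$, so $(-1)^{(n+\nu+2m)/2}F^{(n+\nu+2m)}(0)=|F^{(n+\nu+2m)}(0)|>0$. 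Collecting the powers of $i$, this gives $F^{(n)}(it)=\varepsilon_{n}\sum_{m\ge 0}c_{m}t^{\nu+2m}$ on $(0,T)$, where $\varepsilon_{n}$ is a unit scalar (equal to $(-1)^{n/2}$ for $n$ even and to $i(-1)^{(n+1)/2}$ for $n$ odd) and $c_{m}=|F^{(n+\nu+2m)}(0)|/(\nu+2m)!>0$ for all $m$. Therefore $|F^{(n)}(it)|=\sum_{m\ge 0}c_{m}t^{\nu+2m}$, a power series with strictly positive coefficients, and hence a strictly increasing function of $t$ on $(0,T)$.

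The main obstacle is the integrability step: squeezing honest $L^{1}$-control of $f$ and of all $x^{n}f(x)$ out of the weak decay hypothesis, which is precisely what licenses Fourier inversion and term-by-term differentiation. Once this is secured, the remainder is just the elementary Laplace-type asymptotic $\int_{0}^{\infty}x^{2j}f\to+\infty$ together with routine bookkeeping of signs and factors of $i$.
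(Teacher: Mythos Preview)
The paper does not actually prove this proposition: it is quoted from P\'olya~\cite{Pol3} as a criterion obtained by ``a method of Hardy,'' with no argument supplied. So there is nothing in the paper to compare your proof against line by line.

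That said, your argument is correct and is in fact the classical Hardy argument P\'olya is alluding to. The essential mechanism---that finitely many real zeros of $f$ force $f$ to have a fixed sign near $+\infty$, hence the even moments $\int_0^\infty x^{2j}f(x)\,dx$ are eventually positive, hence $(-1)^j F^{(2j)}(0)>0$ for large $j$ via Fourier inversion, hence the Taylor expansion of $F^{(n)}(it)$ on $|t|<T$ has all terms of one sign---is exactly the intended proof. Your handling of the integrability step is also right: the hypothesis $t^2 F^{(n)}(t)\to 0$ for every $n$ gives $F^{(n)}\in L^1(\mathbb{R})$, integration by parts then gives $|f(x)|\le C_n|x|^{-n}$, and this is precisely what licenses Fourier inversion and differentiation under the integral sign to obtain $(-1)^j F^{(2j)}(0)=\pi^{-1}\int_0^\infty x^{2j}f(x)\,dx$.

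Two small remarks. First, your tail-domination estimate is fine, but you might note explicitly that ``finitely many real zeros'' together with continuity of $f$ forces $f$ to be strictly of one sign on $(x_0,\infty)$ (not merely $\ge 0$), so that $c=\int_{x_0+1}^{x_0+2}f>0$ genuinely holds. Second, in the parity bookkeeping it is worth recording that for $n$ even $F^{(n)}(it)$ is real while for $n$ odd it is purely imaginary (since $F$ is real on $\mathbb{R}$ and even), which is why the unit $\varepsilon_n$ factors out cleanly and $|F^{(n)}(it)|$ is the honest sum $\sum_{m\ge 0}c_m t^{\nu+2m}$ with $c_m>0$. With those clarifications the proof is complete.
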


\subsection{De Bruijn's results}

In \cite{DB}, de Bruijn continued P\'olya's line of research and studied the
zeros of entire functions of the form (\ref{FF}), under the conditions (\ref%
{PA1}) and (\ref{PA2}). Examples of such entire functions include 
\[
\int_{-\infty }^{\infty }e^{-t^{2n}}e^{izt}\,dt,\text{ }n\in \mathbb{N} 
\]%
and 
\[
\int_{-\infty }^{\infty }\exp \left( -a\cosh t\right) e^{izt}\,dt,\text{ }%
a>0. 
\]%
Two of the main results of \cite{DB} are the next two theorems.

\begin{theorem}
\label{DB1}\bigskip Let $f\left( t\right) $ be an entire function such
that its derivative $f^{\prime }\left( t\right) $ is the limit (uniform in
any bounded domain) of a sequence of polynomials, all of whose roots lie on
the imaginary axis. Suppose further that $f$ is not a constant, $f\left(
t\right) =f\left( -t\right) $, and $f\left( t\right) \geq 0$ for $t\in 
\mathbb{R}$. Then the integral%
\[
\int_{-\infty }^{\infty }\exp \left( -f\left( t\right) \right) e^{izt}\,dt 
\]%
has real roots only.
\end{theorem}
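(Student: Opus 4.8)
The plan is to exhibit the function $\exp(-f(t))$ as a suitable limit of functions already known (via Theorem~\ref{UF}) to produce Fourier transforms with only real zeros, and then to pass the real-zeros property through the limit using Hurwitz's theorem. First I would use the hypothesis that $f'$ is a uniform-on-bounded-sets limit of polynomials with purely imaginary roots. By the Laguerre--P\'olya characterization in Theorem~\ref{UF} (equivalently, the form~(\ref{LP}) after the change $z \mapsto iz$), $f'(it)$ — or more precisely the associated entire function — lies in the Laguerre--P\'olya class. Integrating, and using $f(t) = f(-t)$ together with $f \geq 0$ on $\mathbb{R}$ and $f$ nonconstant, I would deduce that $f$ itself has the shape of an even, nonnegative entire function whose growth is controlled: one shows $f(t) = c\,t^{2} + \sum_k$ (even elementary factors) with $c \geq 0$, and in particular $f(t) \to \infty$ as $t \to \pm\infty$ along the real axis, so that $e^{-f(t)}$ decays and its Fourier transform is entire.

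Next I would set up the approximation. Write $f_n$ for the partial products / truncations of $f$ coming from the polynomial approximants to $f'$, so that each $f_n$ is an even real polynomial (or a finite combination of the allowed factors) with $f_n \to f$ uniformly on bounded sets and $f_n(t) \to \infty$ as $|t| \to \infty$ fast enough. For each fixed $n$, I would check that $\varphi_n(t) := \exp(f(t) - f_n(t))$ — or, more cleanly, that $\exp(-f_n(t))$ can itself be written as a product of universal factors of the basic types $e^{Bt^2}$ ($B \geq 0$) and $\cosh(at)$-like / Gaussian factors applied to a function already known to have a real-zeros Fourier transform (the simplest base case being $\exp(-ct^2)$, whose Fourier transform is again a Gaussian with only the real zero structure, or the $\int e^{-t^{2q}}e^{izt}dt$ examples and~(\ref{Polyaexample}) from P\'olya). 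Invoking Theorem~\ref{UF} repeatedly, each $\int_{-\infty}^{\infty} e^{-f_n(t)} e^{izt}\,dt$ has only real zeros.

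Finally I would pass to the limit: the functions $H_n(z) := \int e^{-f_n(t)} e^{izt}\,dt$ converge to $H(z) := \int e^{-f(t)} e^{izt}\,dt$ uniformly on compact subsets of $\mathbb{C}$ — this is where the uniform lower bounds on $f_n$ (preventing loss of mass at infinity) and the bound~(\ref{PA2})-type control are used to justify dominated convergence in the integral for complex $z$. Then by Hurwitz's theorem, since each $H_n$ is entire with only real zeros and $H$ is entire and not identically zero, $H$ also has only real zeros (a nonreal zero of $H$ would be a limit of nonreal zeros of the $H_n$).

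I expect the main obstacle to be the bookkeeping in the middle step: deducing from "$f'$ is a limit of polynomials with imaginary roots" the precise structural form of $f$, and then organizing the approximants $f_n$ so that (a) $e^{-f_n}$ genuinely factors as universal factors times an allowed base function, and (b) the tail control is strong enough for the uniform-on-compacts convergence $H_n \to H$. The evenness and nonnegativity of $f$ are crucial here — they force the linear and odd terms to drop and keep $e^{-f_n}$ a bona fide decaying density — but matching P\'olya's universal-factor framework exactly, rather than just morally, is the delicate part.
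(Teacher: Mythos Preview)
Your overall architecture---approximate, show each approximant's transform has only real zeros, pass to the limit via Hurwitz---is reasonable, and approximation does figure in de Bruijn's argument. The gap is in the middle step, exactly where you anticipate trouble. P\'olya's universal-factor catalogue (Theorem~\ref{UF}) is too rigid for this: a universal factor $\varphi$ must have $\varphi(iz)\in\mathcal{LP}$, and if $\varphi=e^{-g}$ is zero-free then the product in~(\ref{LP}) is empty, forcing $\varphi(iz)=b\,e^{-\lambda z^{2}+\kappa z}$, i.e.\ $\varphi(t)=b\,e^{Bt^{2}}$ with $B\geq 0$. So among exponentials the only universal factors are Gaussians. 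You therefore cannot manufacture $e^{-f_{n}(t)}$, for a general even polynomial approximant $f_{n}$, as ``universal factors applied to a base $e^{-ct^{2q}}$'': multiplying $e^{-t^{2q}}$ by the allowed $e^{Bt^{2}}$ and $\cosh(at)$ factors produces $e^{-t^{2q}+Bt^{2}}\prod_{j}\cosh(a_{j}t)$, not $e^{-(\text{polynomial})}$. For instance $f(t)=t^{6}+t^{4}$ satisfies all the hypotheses (here $f'(t)=2t^{3}(3t^{2}+2)$ has only the imaginary roots $0,\pm i\sqrt{2/3}$), yet $e^{-t^{6}-t^{4}}$ is not reachable from any of the P\'olya base cases~(\ref{Polyaexample}) by universal factors. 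The structural information you extract about $f$ in your first paragraph is correct in spirit but simply does not interface with Theorem~\ref{UF}.

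The paper's indicated route (de Bruijn's) goes through a genuinely different mechanism, namely Theorem~\ref{DB2} together with \emph{strong} universal factors. Theorem~\ref{DB2} treats integrands $e^{-P(t)}q(e^{t})$ with $P(t)=\sum_{-N}^{N}p_{n}e^{nt}$ an exponential polynomial and shows that all but finitely many zeros of the transform are real; the strong universal factors of type~(\ref{sufpoly})---finite sums $\sum a_{n}e^{n\lambda t}$ with roots on the imaginary axis---then shrink the strip containing any remaining non-real zeros down to the real axis (Theorem~\ref{SUFpoly}). The hypothesis that $f'$ is a limit of polynomials with purely imaginary roots is precisely what allows the approximation to be organized within this exponential-sum framework rather than the ordinary-polynomial one; that is the missing idea in your scheme.
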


The proof of this result relies on the following.

\begin{theorem}
\label{DB2}Let $N\in \mathbb{N}$, and let 
\[
P\left( t\right) =\sum_{n=-N}^{N}p_{n}e^{nt}\text{, }\Re p_{n}>0\text{, }%
p_{-n}=\bar{p}_{n}\text{, }n=1,...,N. 
\]%
Let the function $q\left( z\right) $ be regular in the sector $-\pi
/2N-N^{-1}\arg p_{N}<\arg z<\pi /2N-N^{-1}\arg p_{N}$ and on its boundary,
with the possible exception of $z=0$ and $z=\infty $ which may be poles (of
arbitrary finite order) for $q\left( z\right) $. Furthermore suppose 
$\bar{q}(z)
=q\left( 1/\bar{z}\right) $ in this sector (in other
words, $q\left( z\right) $ is real for $\left\vert z\right\vert =1$). Then
all but a finite number of roots of the function%
\begin{equation}
\Psi \left( z\right) =\int_{-\infty }^{\infty }\exp \left( -P\left( t\right)
\right) q\left( e^{t}\right) e^{izt}\,dt  \label{psi}
\end{equation}%
are real.
\end{theorem}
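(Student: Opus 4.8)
The plan is to prove that $\Psi$ has only finitely many zeros off the real axis by combining a contour rotation, which yields sharp exponential control of $|\Psi(z)|$, with an analysis of the zeros of large modulus by the method of steepest descent. Set $F(t):=e^{-P(t)}q(e^{t})$ and $\theta_{0}:=-N^{-1}\arg p_{N}$, so that the sector of regularity of $q$ is $\{\theta_{0}-\tfrac{\pi}{2N}<\arg w<\theta_{0}+\tfrac{\pi}{2N}\}$; write $J:=(\theta_{0}-\tfrac{\pi}{2N},\theta_{0}+\tfrac{\pi}{2N})$, an interval of length $\pi/N$ that contains $0$ since $\Re p_{N}>0$. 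The first observation is a symmetry: because $p_{-n}=\bar p_{n}$ one has $P(-t)=\overline{P(t)}$ for real $t$, and the reflection hypothesis $\overline{q(w)}=q(1/\bar w)$ gives $q(e^{-t})=\overline{q(e^{t})}$; hence $F(-t)=\overline{F(t)}$ and therefore $\Psi(\bar z)=\overline{\Psi(z)}$. Thus $\Psi$ is real on $\mathbb{R}$ and its non-real zeros come in conjugate pairs, so it is enough to bound the number of zeros in the open upper half-plane.

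Next I would carry out the contour rotation. The factor $e^{-P(t)}$ decays doubly exponentially as $\Re t\to\pm\infty$, uniformly on every closed horizontal strip contained in $\{\Im t\in J\}$ (the relevant rate being governed by $|p_{N}|\cos(\arg p_{N}+N\Im t)>0$), whereas $t\mapsto q(e^{t})$ is holomorphic on such a strip --- $e^{t}$ maps it into the sector of regularity of $q$ --- and grows there at most like a power of $e^{|\Re t|}$. Hence Cauchy's theorem, with the connecting segments at $\Re t=\pm R$ dropping out as $R\to\infty$, gives for every $\psi\in J$ the rotated representation
\[
\Psi(z)=e^{-\psi z}\int_{-\infty}^{\infty}F(s+i\psi)\,e^{izs}\,ds .
\]
Estimating the integral crudely and then choosing $\psi$ close to the right endpoint of $J$ when $\Re z>0$ and close to the left endpoint when $\Re z<0$ produces a constant $\mu>0$ with
\[
\log|\Psi(z)|\le-\mu|\Re z|+\tfrac1N|\Im z|\log^{+}|\Im z|+C(1+|\Im z|)
\]
for all $z$; in particular $\Psi$ decays exponentially along $\mathbb{R}$ and is entire of order one.

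Finally I would locate the zeros of large modulus by steepest descent applied to $\Psi(z)=\int e^{\varphi(t)}q(e^{t})\,dt$ with $\varphi(t):=-P(t)+izt$. For $|z|$ large the saddle equation $P'(t)=iz$ has two contributing roots $t_{s}^{\pm}(z)$, determined to leading order by $e^{\pm N t_{s}^{\pm}}\sim\pm iz/(Np_{\pm N})$, with $\Re t_{s}^{\pm}\to\pm\infty$; the symmetry of the first step forces $t_{s}^{-}=-\overline{t_{s}^{+}}$ and makes the two saddle contributions mutually conjugate to leading order, so that $\Psi(z)=2\Re\!\bigl[C(z)e^{\varphi(t_{s}^{+}(z))}\bigr](1+o(1))$ with $C(z)$ algebraic and nonvanishing. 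A zero of $\Psi$ therefore forces the interference to cancel, i.e.
\[
\varphi(t_{s}^{+}(z))-\varphi(t_{s}^{-}(z))=\tfrac{2iz}{N}\!\left(\log\tfrac{z}{N|p_{N}|}-1\right)+o(1)\in i\pi+2\pi i\mathbb{Z}+E(z),
\]
where the error $E(z)$ collects the ratio of the steepest-descent prefactors and the subleading terms. The crucial point is that $E(x)$ is purely imaginary for real $x$ --- on $\mathbb{R}$ the interference cancellation is an identity between complex conjugates, by the first step --- so that $\Re E(x+iy)=O(y)$; taking real parts in the displayed relation then gives $\Im z\cdot\log\frac{|z|}{N|p_{N}|}=O(\Im z)$, which forces $\Im z=0$ once $|z|$ is large enough, say $|z|\ge R_{0}$. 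Since the entire function $\Psi$ has only finitely many zeros in $\{|z|\le R_{0}\}$, it follows that $\Psi$ has only finitely many non-real zeros; the factor $q$, which perturbs only the algebraic prefactor $C(z)$ and cannot affect the reality of zeros with $|z|\ge R_{0}$, can contribute at most finitely many exceptional zeros.

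The step I expect to be the main obstacle is this last one: a steepest-descent analysis of $\Psi(z)$ that is uniform as $|z|\to\infty$ throughout the upper half-plane, with the relevant saddles drifting toward the boundary of the admissible strip $J$ and with general coefficients $p_{n}$ and a general admissible $q$ in tow, and --- most delicately --- with the error term controlled sharply enough (to within $O(\Im z)$, not merely $O(\log|z|)$) to extract $\Im z=0$. Handling this carefully, together with the bookkeeping that converts ``no non-real zeros of large modulus'' into ``all but finitely many zeros real'', is the technical core of de Bruijn's argument.
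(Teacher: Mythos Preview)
The paper is a survey and does not prove this theorem; it attributes the result to de~Bruijn and remarks only that ``a key ingredient used to prove Theorems~\ref{DB1} and~\ref{DB2} was the notion of a \emph{strong universal factor},'' pointing in particular to Theorem~\ref{SUFpoly} on trigonometric-polynomial factors. Your proposal takes a different, more direct route: a contour shift in the $t$-plane for growth control, followed by a two-saddle steepest-descent expansion to locate the large zeros, with no appeal to the universal-factor machinery. This is in fact close to the asymptotic method de~Bruijn himself employs at the analytic core of his paper; the strong-universal-factor formalism is most naturally read as the device that upgrades ``all but finitely many zeros real'' to ``all zeros real'' in results like Theorem~\ref{DB1}, rather than as the engine behind Theorem~\ref{DB2} itself.

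Your outline is sound as a strategy, and you correctly isolate the hard step. Two places need tightening. First, the relation $t_s^{-}(z)=-\overline{t_s^{+}(z)}$ and the formula $\Psi(z)=2\Re\bigl[C(z)e^{\varphi(t_s^{+}(z))}\bigr](1+o(1))$ hold literally only for real $z$; off the axis you should track two analytic saddle contributions $A^{\pm}(z)$ satisfying $A^{-}(x)=\overline{A^{+}(x)}$ on $\mathbb{R}$ and analyse the zero condition $A^{+}(z)+A^{-}(z)+(\text{remainder})=0$ for complex $z$. Second, and more seriously, the passage ``$\Re E(x)=0$ for real $x$, hence $\Re E(x+iy)=O(y)$'' requires a bound on $\partial_{y}\Re E$ (equivalently on $\partial_{x}\Im E$) that is \emph{uniform in $x$ as $|x|\to\infty$}; without that uniformity the comparison $\Im z\cdot\log(|z|/N|p_N|)=O(\Im z)$ does not force $\Im z=0$. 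Establishing this uniformly for general coefficients $p_n$ and general admissible $q$ is exactly the technical core you flag at the end, and it is genuinely nontrivial --- it is where the real work in de~Bruijn's argument lies.
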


A key ingredient used to prove Theorems \ref{DB1} and \ref{DB2} was the notion of 
a \textit{strong universal factor}. Suppose that $F$ satisfies (\ref{PA1}),
(\ref{PA2}) and the complex function $S\left( t\right) $ satisfies the
following two properties.

\begin{enumerate}
\item If the roots of (\ref{FF}) lie in a strip $\left\vert \Im%
{z}\right\vert \leq \Delta $, for some $\Delta >0$, then those of 
\begin{equation}
\int_{-\infty }^{\infty }F\left( t\right) \,S\left( t\right) e^{izt}\,dt
\label{suf}
\end{equation}
lie in a strip $\left\vert \Im{z}\right\vert \leq \Delta _{1}$, where
the constant $\Delta _{1}<\Delta $ is independent of $F$.

\item If, for any $\varepsilon >0$, all but a finite number of roots of (\ref%
{FF}) lie in the strip $\left\vert \Im{z}\right\vert \leq \varepsilon $,
then the function (\ref{suf}) has only a finite number of non-real roots.
\end{enumerate}
Then $S\left( t\right) $ is called a strong universal factor. By
definition, any strong universal factor is a universal factor in
P\'olya's sense.
The function $S\left( t\right) =\cosh t$ is a simple example of a strong
universal factor. As discussed earlier, the Gaussian density $e^{Bt^{2}}$ is
a universal factor and the following theorem of \cite{DB} 
shows it has Property~1. It was later proved in \cite{KKL} to also have Property~2.

\begin{theorem}
\label{Gausssuf}Suppose that the function $F$ satisfies (\ref{PA1}), (\ref%
{PA2}) and the zeros of the entire function (\ref{FF}) lie in the
strip $\left\vert \Im{z}\right\vert \leq \Delta $. Then all the roots of
the entire function 
\begin{equation}
\int_{-\infty }^{\infty }F\left( t\right) \,e^{\lambda t^{2}/2}e^{izt}\,dt
\label{Gsuf}
\end{equation}%
lie in the strip 
\begin{equation}
\left\vert \Im{z}\right\vert \leq \left[ \max \left( \Delta ^{2}-\lambda
,0\right) \right] ^{1/2}.  \label{strip}
\end{equation}
\end{theorem}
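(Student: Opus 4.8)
The plan is to prove this by a heat-flow / semigroup argument, tracking how the zeros of the Fourier transform move as the Gaussian factor is turned on continuously. Write $F_\lambda(t) := e^{\lambda t^2/2} F(t)$ and $H(z,\lambda) := \int_{-\infty}^\infty F_\lambda(t) e^{izt}\,dt$, so that $H(z,0)$ is the function in \eqref{FF} whose zeros lie in $\{|\Im z|\le\Delta\}$, and we must control the zeros of $H(z,\lambda)$ for $\lambda>0$. The key observation is that $u(z,\lambda) := H(z,\lambda)$ satisfies the backward heat equation $\partial_\lambda u = -\tfrac12 \partial_z^2 u$: differentiating under the integral sign brings down a factor $\tfrac12 t^2$ from the $\lambda$-derivative, while $\partial_z^2$ acting on $e^{izt}$ brings down $-t^2$, and the growth condition \eqref{PA2} on $F$ (hence the Gaussian-type decay of $F_\lambda$ for $\lambda$ in a bounded range, still dominated by $\exp(-|t|^{2+\alpha})$ up to lower-order corrections) justifies the differentiation and guarantees $H(\cdot,\lambda)$ is entire of order $\le 2$ for each fixed $\lambda$ in the relevant range. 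First I would establish these analytic preliminaries: entirety, order bound, and the PDE, together with the symmetry $H(-\bar z,\lambda) = \overline{H(z,\lambda)}$ coming from \eqref{PA1}, which forces the non-real zeros to occur in conjugate pairs symmetric about the real axis.

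The core of the argument is a differential inequality for the imaginary parts of the zeros. Fix $\lambda_0 > 0$ and suppose, toward tracking the extreme zeros, that $z_0 = x_0 + i y_0$ is a zero of $H(\cdot,\lambda_0)$ with $y_0 > 0$ maximal (using that, by the order-$2$ bound and an argument-principle / normal-families consideration, the zeros vary continuously in $\lambda$ and the supremum of their imaginary parts is attained and is a well-behaved function $Y(\lambda)$). Differentiating the identity $H(z(\lambda),\lambda)=0$ along a zero curve gives $z'(\lambda) = -\partial_\lambda H / \partial_z H = \tfrac12\,\partial_z^2 H / \partial_z H$ at $(z(\lambda),\lambda)$, so $\Im z'(\lambda) = \tfrac12\,\Im\!\big(H''/H'\big)$. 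The decisive step is to show $\tfrac{d}{d\lambda}\, Y(\lambda) \le -1/(2Y(\lambda))$ whenever $Y(\lambda)>0$, equivalently $\tfrac{d}{d\lambda}\,Y(\lambda)^2 \le -1$; this is exactly the content one needs, since integrating from $0$ to $\lambda$ yields $Y(\lambda)^2 \le Y(0)^2 - \lambda \le \Delta^2 - \lambda$, and once $Y(\lambda)^2$ hits $0$ it stays there, giving the bound \eqref{strip}. The inequality $\Im\big(H''/H'\big) \le -1/Y$ at a topmost zero is obtained from the Hadamard product for $H'$ (which, being in $\mathcal{LP}$-adjacent position — real-axis-symmetric, order $\le 2$ — has zeros $\{w_k\}$ all of imaginary part $\le Y$): write $H''/H'$ at the point $z_0$ with $\Im z_0 = Y$ as a sum $\sum_k 1/(z_0 - w_k)$ plus the contribution of the exponential/polynomial prefactor; each term $1/(z_0-w_k)$ has imaginary part $-(Y - \Im w_k)/|z_0-w_k|^2 \le 0$, the term from the zero at $w_k = \bar z_0$ (present by conjugate symmetry, when $z_0$ is non-real) contributes $-2Y/|z_0-\bar z_0|^2 \cdot (\text{something}) $ — more precisely pairing $w_k$ with $\bar w_k$ gives imaginary part $-2Y/|z_0 - w_k|^2$ times a nonnegative factor — and one extracts at least $-1/(2Y)$ from the self-conjugate pair. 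I would carry this out carefully, as the bookkeeping of the prefactor terms (they contribute a quantity with sign controlled by $\lambda \ge 0$, hence harmless) is where sign errors hide.

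The main obstacle I anticipate is making the "topmost zero" analysis rigorous when $H(\cdot,\lambda)$ has infinitely many zeros accumulating at infinity: one must show the sup of imaginary parts is attained, varies in a Lipschitz (or at least absolutely continuous) way in $\lambda$, and that the informal zero-curve computation is valid even across collisions of zeros. The clean way around this is to avoid following individual zeros and instead argue directly on the entire function: show that for each $\lambda$, $H(\cdot,\lambda)$ has genus $\le 1$ and real-symmetric zeros, and establish a \emph{monotone strip-shrinking} statement by a compactness/continuity argument — if the zeros stayed in $\{|\Im z| \le c\}$ for $c > \sqrt{\max(\Delta^2-\lambda,0)}$ one derives a contradiction by applying the heat-equation estimate on a slightly larger strip and a Phragmén–Lindelöf bound to control $H$ there. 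An alternative, and probably the route de Bruijn actually takes, is to bypass the continuous flow entirely: approximate $F$ by the functions $e^{-P(t)} q(e^t)$ of Theorem~\ref{DB2}, or directly convolve: note that $H(z,\lambda) = \int H(z - w, 0)\, g_\lambda(w)\, dw$ where $g_\lambda(w) = (2\pi\lambda)^{-1/2} e^{-w^2/(2\lambda)}$ is (up to the sign convention in the exponent, which is why the strip \emph{shrinks}) the Gaussian, so $H(\cdot,\lambda)$ is a Gaussian "smoothing" of $H(\cdot,0)$ in the $z$-variable along the imaginary direction; then invoke the classical fact (Hermite–Biehler / Pólya, as in the $\mathcal{LP}$ theory behind Theorem~\ref{UF}) that convolving a function whose zeros lie in a horizontal strip with such a Gaussian contracts the strip by exactly the stated amount. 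Either way, the heart is the same differential inequality $\tfrac{d}{d\lambda}Y^2 \le -1$; I would present the heat-equation version as the conceptual proof and remark that the estimates are justified by the $\exp(-|t|^{2+\alpha})$ decay in \eqref{PA2}.
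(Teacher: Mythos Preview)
The paper does not prove Theorem~\ref{Gausssuf}; it is quoted from de~Bruijn~\cite{DB}, this being a survey. Your backward-heat-equation strategy is sound and is essentially the modern approach the paper later attributes to Csordas--Smith--Varga~\cite{CSV} in the discussion around equations~(\ref{BHE})--(\ref{zerodynamic}); de~Bruijn's 1950 argument proceeds rather differently, via approximation and limiting arguments for trigonometric-type sums. The target inequality $\tfrac{d}{d\lambda}Y(\lambda)^2\le -1$ is exactly right.

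There is, however, a genuine gap at the step where you extract the quantitative rate. You expand $H''/H'$ at the topmost zero $z_0$ via the Hadamard factorization of $H'$, with zeros $\{w_k\}$, and then assert that $\bar z_0$ is one of the $w_k$. But $\bar z_0$ is a zero of $H$, not of $H'$; nothing places it among the $w_k$. From the zeros of $H'$ alone (even granting a Gauss--Lucas-type result confining them to the strip $|\Im w|\le Y$) you obtain only that each summand has nonpositive imaginary part, hence $Y'(\lambda)\le 0$ --- monotonicity without any rate. The correct computation: for a simple zero $z_0$ of $H$, write $H(z)=(z-z_0)g(z)$, so that $H''(z_0)/H'(z_0)=2g'(z_0)/g(z_0)$; from the Hadamard product of $H$ itself (order $<2$ by~(\ref{PA2}), hence genus $\le 1$) this equals $2\sum_{z_j\ne z_0} 1/(z_0-z_j)$ plus prefactor terms with nonpositive imaginary part. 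Now $\bar z_0$ genuinely appears in the sum, contributing $1/(2iY)$, and all other summands have $\Im\le 0$ since $\Im z_j\le Y$; this yields $\Im z_0'(\lambda)\le -1/(2Y)$ as needed. Separately, your convolution alternative $H(\cdot,\lambda)=H(\cdot,0)\ast g_\lambda$ is not merely a sign convention issue: that identity would solve the \emph{forward} heat equation, whereas $H$ satisfies the backward one, and no forward-in-$\lambda$ Gaussian-convolution representation exists (the correct relation is $H(\cdot,0)=H(\cdot,\lambda)\ast g_\lambda$, which goes the wrong way for your purpose).
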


De Bruijn also showed that a large class of functions are strong universal
factors. In particular, functions of the type 
\begin{equation}
S\left( t\right) =\sum_{n=-N}^{N}a_{n}e^{n\lambda t}\text{, }a_{n}=\bar{a}%
_{-n}\text{, }\lambda >0  \label{sufpoly}
\end{equation}%
are strong universal factors if all their roots lie on the imaginary axis.
Conversely, if a function of the form (\ref{sufpoly}) is a strong universal
factor, then all of its roots lie on the imaginary axis. In particular, 
de Bruijn proved:

\begin{theorem}
\label{SUFpoly}Suppose the roots of the function (\ref{sufpoly}) lie on the
imaginary axis. Then if the roots (resp., all but a finite number of the roots) of (%
\ref{FF}) lie in the strip $\left\vert \Im{z}\right\vert \leq \Delta $,
then the roots (resp., all but a finite number of the roots) of~(\ref%
{suf}) lie in the strip $\left\vert \Im{z}\right\vert \leq \left[ \max
\left( \Delta ^{2}-\lambda ^{2}N/2,0\right) \right] ^{1/2}$.
\end{theorem}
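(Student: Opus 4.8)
The plan is to reduce Theorem~\ref{SUFpoly} to the two results on Gaussian factors (Theorems~\ref{Gausssuf} and~\ref{DB2}) by approximating the exponential polynomial $S(t) = \sum_{n=-N}^N a_n e^{n\lambda t}$ by a Gaussian. First I would observe that since the roots of $S$ lie on the imaginary axis and $S$ is of the form $P(e^{\lambda t})$ for a polynomial $P$ of degree $2N$ (after multiplying by $e^{N\lambda t}$), one can write $S(t) = a_N e^{N\lambda t}\prod_{j=1}^{2N}(e^{\lambda t} - e^{i\beta_j})$ with $\beta_j \in \mathbb{R}$, or equivalently $S(t) = c\prod_{j}(\cosh(\lambda t/2) - \cos\gamma_j)$ up to a real constant and a shift; the key structural fact is that each factor of the form $\cosh(\lambda t/2) - \cos\gamma$ is a nonnegative function whose logarithm behaves like $\lambda|t|/2$ at infinity. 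The reason $\lambda^2 N/2$ appears is that the ``total exponential growth rate'' of $\log S(t)$ as $t\to\pm\infty$ is $N\lambda|t|$, and each unit of exponential growth rate contracts the strip by the corresponding amount — this should be made precise by the strong-universal-factor machinery.

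Second I would invoke de Bruijn's strong-universal-factor framework directly: the content of Theorem~\ref{DB2} (with $q \equiv 1$, or a suitable $q$) is precisely that multiplication by $\exp(-P(t))$ for an exponential polynomial $P$ with $\Re p_n > 0$ pushes all but finitely many roots onto the real axis, and the quantitative strip-contraction statement is the ``Property~1'' half of being a strong universal factor. So the strategy is: (i) show that $S(t)$, having all roots on the imaginary axis, can be written (up to harmless factors) as $\exp(-P(t))$-type data or as a product of elementary strong universal factors $\cosh(\lambda t/2) - \cos\gamma_j$; (ii) establish the strip contraction for a single elementary factor $\cosh(\mu t) - \cos\gamma$, showing it contracts $|\Im z| \le \Delta$ to $|\Im z| \le [\max(\Delta^2 - \mu^2/2 \cdot 2, 0)]^{1/2}$ — i.e., by $\mu^2$ in the relevant normalization — and then (iii) compose $2N$ such factors, each with $\mu = \lambda/2$, so the cumulative contraction in the square of the half-width is $2N \cdot (\lambda/2)^2 \cdot 2 = \lambda^2 N/2$. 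The ``all but finitely many'' version follows identically using Property~2 in place of Property~1.

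For step (ii), the natural route is a contour/Phragmén–Lindelöf argument in the spirit of the proof of Theorem~\ref{Gausssuf}: write the Fourier transform of $S(t)F(t)$ as a finite linear combination of translates $f(z + in\lambda)$ of $f(z) = \int F(t)e^{izt}\,dt$ (since multiplication by $e^{n\lambda t}$ corresponds to the shift $z \mapsto z + in\lambda$), or better, represent the transform of the elementary factor against $F$ as a contour integral of $f$ and estimate. Using that $f$ has no zeros for $|\Im z| > \Delta$ together with growth bounds coming from~(\ref{PA2}), one shows the transformed function is zero-free (resp. zero-free up to finitely many) in the larger region $|\Im z| > [\max(\Delta^2 - \lambda^2 N/2, 0)]^{1/2}$.

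The main obstacle I expect is step (ii): getting the \emph{sharp} constant $\mu^2$ (equivalently $\lambda^2 N/2$ after composing) rather than some cruder bound. A soft argument easily gives \emph{some} strip contraction for each factor, but pinning down that $\cosh(\lambda t/2) - \cos\gamma$ contracts the squared half-width by exactly $\lambda^2/2$ — and that this is additive over the $2N$ factors with no loss — requires carefully tracking the Phragmén–Lindelöf indicator/exponential type in the $t$-variable and relating it to the imaginary shift in the $z$-variable, essentially because the factor $\cosh(\lambda t/2)$ has exponential type $\lambda/2$ in each direction and the square root in~(\ref{strip}) reflects a Gaussian-type comparison. One must also handle the $\cos\gamma_j$ subtraction (which can make the factor vanish at real $t$, but harmlessly since $F$ is integrable and the zeros are isolated), and verify the constant is not improved by possible coincidences among the $\gamma_j$. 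If a self-contained proof of the sharp constant is too long, the honest fallback is to cite \cite{DB} for the strip-contraction estimate for exponential-polynomial factors and present only the reduction; given that the paper is a survey, citing de Bruijn here is entirely appropriate.
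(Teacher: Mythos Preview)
The paper does not prove Theorem~\ref{SUFpoly} at all: this is a survey, and the theorem is simply stated and attributed to de~Bruijn~\cite{DB}. Your own fallback --- ``cite \cite{DB} for the strip-contraction estimate \ldots\ given that the paper is a survey, citing de~Bruijn here is entirely appropriate'' --- is exactly what the paper does.

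That said, your sketched strategy is essentially de~Bruijn's own argument in~\cite{DB}: factor $S(t)$ using the hypothesis that its roots lie on the imaginary axis, and then iterate an elementary strip-contraction lemma. One correction to your bookkeeping: writing $w=e^{\lambda t}$, the polynomial $w^N S(t)$ has its $2N$ roots on $|w|=1$, and since $S$ is real on $\mathbb{R}$ these pair into $N$ conjugate pairs, giving
\[
S(t)=c\prod_{j=1}^{N}\bigl(\cosh(\lambda t)-\cos\beta_j\bigr),
\]
i.e., $N$ elementary factors with parameter $\lambda$, not $2N$ factors with $\mu=\lambda/2$. Each factor $\cosh(\lambda t)-\cos\beta_j$ contracts $\Delta^2$ by $\lambda^2/2$, and iterating gives the stated $\lambda^2 N/2$. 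Your arithmetic happens to land on the same total because $2N\cdot(\lambda/2)^2 = N\cdot\lambda^2/2$, but the intermediate picture (``$2N$ factors of the form $\cosh(\lambda t/2)-\cos\gamma$'') is not the correct factorisation. You are right that the sharp constant in the single-factor step is the crux; de~Bruijn obtains it by writing the Fourier transform of $(\cosh(\lambda t)-\cos\beta)F(t)$ as a linear combination of the shifts $f(z\pm i\lambda)$ and $f(z)$ and analysing the resulting three-term relation via the product representation of $f$, rather than by a direct Phragm\'en--Lindel\"of argument.
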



\subsection{The de Bruijn-Newman constant}

Here is a direct consequence of Theorem \ref{Gausssuf}. Recall that the function $%
\Phi $ in (\ref{Phi}) is (proportional to) 
a probability density related to the Riemann $\xi $
function. Define for any $\lambda \in \mathbb{R}$, $z\in \mathbb{C}$, 
\begin{equation}
H_{\lambda }\left( z\right) =\int_{-\infty }^{\infty }e^{\lambda t^{2}}\Phi
\left( t\right) e^{izt}\,dt.  \label{Hzeta}
\end{equation}%
Applying 
Polya's results (see the discussion following Theorem~\ref{UF} above),
it follows that there exists a constant $%
\Lambda _{DN}$ (but at this stage of our discussion, it
could potentially be $\pm \infty $), such that $H_{\lambda }$
has all real zeros\ if and only if $\lambda \geq \Lambda _{DN}$. Moreover,
since the roots of $\xi $ lie in the strip $\left\vert \Im%
{z}\right\vert \leq \frac{1}{2}$, de Bruijn's quantitative bound~(\ref{strip}) gives
an upper bound $\Lambda _{DN}\leq 1/2$.

More generally, one can define the functions $H_{f,\lambda }$ for a 
density $f$ and $H_{\rho,\lambda }$ for a measure $\rho$ by (\ref{Hf1})
and~(\ref{Hrho}) and then a corresponding $\Lambda _{DN} (f)$ or 
$\Lambda _{DN} (\rho)$. This will be the topic of Section~\ref{sec:general}
below.

We return to the constant $\Lambda _{DN}$ defined after Eq.~(\ref{Hzeta}).
The result of de Bruijn that $\Lambda _{DN}\leq
1/2 $ did not exclude the possibility that $\Lambda
_{DN}=-\infty $. 
The first lower bound for $\Lambda _{DN}$ was given in \cite{New1}:

\begin{theorem}
\label{finite}There exists a real number $\Lambda _{DN}$ with $-\infty
<\Lambda _{DN}\leq 1/2$, such that $H_{\lambda }$ defined by (\ref{Hzeta})
has only real zeros when $\lambda \geq \Lambda _{DN}$ but has nonreal zeros
when $\lambda <\Lambda _{DN}$.
\end{theorem}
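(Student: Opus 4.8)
The statement asserts two things: (a) that the set $\{\lambda \in \mathbb{R} : H_\lambda \text{ has only real zeros}\}$ is of the form $[\Lambda_{DN},\infty)$ for some real $\Lambda_{DN}$ (in particular it is nonempty, closed, and bounded below), and (b) that strictly below $\Lambda_{DN}$ there are nonreal zeros. Part of this is already in hand from the earlier discussion: by the universal-factor principle following Theorem~\ref{UF} (with $\varphi(t) = e^{Bt^2}$, $B>0$), if $H_{\lambda_0}$ has only real zeros then so does $H_{\lambda_0 + B}$ for every $B>0$, so the "good" set is an up-set; and by de Bruijn's quantitative bound~(\ref{strip}) (Theorem~\ref{Gausssuf}), since the zeros of $\xi = H_0$ lie in $|\Im z|\le 1/2$, the zeros of $H_\lambda$ are real for all $\lambda \ge 1/2$, so the good set contains $[1/2,\infty)$ and $\Lambda_{DN}\le 1/2$. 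Thus the only real content left to prove is that $\Lambda_{DN} > -\infty$, i.e. that there exists $\lambda^\ast \in \mathbb{R}$ for which $H_{\lambda^\ast}$ has at least one nonreal zero — together with a small argument that the good set is actually closed (so that the infimum is attained).

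First I would establish the lower bound $\Lambda_{DN}>-\infty$. The idea is to apply Proposition~\ref{inftyzero} (Hardy's criterion, via P\'olya) in the contrapositive. Fix some negative $\lambda$ and set $F_\lambda(t) = e^{\lambda t^2}\Phi(t)$; this is even, real-analytic, and for $\lambda<0$ decays fast enough at $\pm\infty$ that $H_\lambda$ is entire, and moreover $F_\lambda^{(n)}(t)\,t^2 \to 0$ for all $n$. Proposition~\ref{inftyzero} then says: if $H_\lambda$ had only finitely many real zeros, then for $n$ large the function $t\mapsto |F_\lambda^{(n)}(it)|$ would be increasing on $(0,T)$ where $iT$ is the nearest singularity of $F_\lambda$ to the origin. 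But $\Phi$, and hence $e^{\lambda t^2}\Phi(t)$, extends to an entire function of $t$ (inspection of~(\ref{Phi}): it is a convergent series of entire functions, with the growth controlled by the double-exponential $e^{-\pi n^2 e^{2u}}$), so there is no such finite singular point $T$ — equivalently $T=\infty$ — and a careful version of the criterion forces $|F_\lambda^{(n)}(it)|$ to be eventually monotone increasing on all of $(0,\infty)$ for large $n$. This is impossible because $F_\lambda(iy) = e^{-\lambda y^2}\Phi(iy)$ and, for $\lambda < 0$, one can show $\Phi(iy)$ does not grow fast enough to dominate the Gaussian factor, so $F_\lambda(iy)\to 0$, contradicting monotone growth. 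Hence for every sufficiently negative $\lambda$, $H_\lambda$ has infinitely many real zeros — but that is not yet a contradiction with "all zeros real." The actual route in \cite{New1} is subtler: one shows that for $\lambda$ very negative $H_\lambda$ must fail to be in $\mathcal{LP}$ by an order/growth argument — a function in the Laguerre–P\'olya class of order $2$ has its Gaussian exponent $e^{-c z^2}$ with $c \ge 0$ tightly constrained by the zero-counting function, and pushing $\lambda \to -\infty$ makes the indicator of $H_\lambda$ incompatible with having all real zeros. The cleanest implementation is to invoke Proposition~\ref{inftyzero}'s contrapositive at the level of \emph{nonreal} zeros: since $H_\lambda$ for $\lambda<0$ cannot have only finitely many real zeros in a way consistent with the non-monotonicity just derived, one concludes $H_\lambda \notin \mathcal{LP}$ for $\lambda$ below some threshold, which is exactly $\Lambda_{DN}>-\infty$.

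Next I would argue that the good set $\mathcal{P}_\Phi := \{\lambda : H_\lambda \in \mathcal{LP}\}$ is closed, so that $\Lambda_{DN} := \inf \mathcal{P}_\Phi$ is attained and $\mathcal{P}_\Phi = [\Lambda_{DN},\infty)$ exactly. For this, take $\lambda_k \downarrow \Lambda_{DN}$ with each $H_{\lambda_k}$ having only real zeros. As $\lambda \to \Lambda_{DN}$ (a finite number, with $\Lambda_{DN} \le 1/2 < $ the threshold of entireness for $\Phi$), the functions $H_{\lambda_k}(z)$ converge to $H_{\Lambda_{DN}}(z)$ uniformly on compact subsets of $\mathbb{C}$ — dominated convergence applies since $e^{\lambda_k t^2}\Phi(t) \le e^{\lambda_1 t^2}\Phi(t)$ for a fixed $\lambda_1 < 1/2$ in the integrable range. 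By Hurwitz's theorem, a locally uniform limit of entire functions each having only real zeros either has only real zeros or is identically zero; $H_{\Lambda_{DN}} \not\equiv 0$ (e.g. $H_{\Lambda_{DN}}(0) = \int e^{\Lambda_{DN}t^2}\Phi(t)\,dt > 0$ by positivity of $\Phi$). Hence $H_{\Lambda_{DN}} \in \mathcal{LP}$, the infimum is attained, and combining with the up-set property from Theorem~\ref{UF} and the bound $\Lambda_{DN}\le 1/2$ from Theorem~\ref{Gausssuf} gives exactly the claimed statement.

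The main obstacle is the first step — proving $\Lambda_{DN} > -\infty$. The up-set property, the upper bound $1/2$, and the closedness are all comparatively routine (universal factors, de Bruijn's strip estimate, Hurwitz). The genuine difficulty is extracting from Proposition~\ref{inftyzero} — or from a direct growth/indicator-function analysis in the Laguerre–P\'olya class — a \emph{quantitative} failure of the real-zeros property for sufficiently negative $\lambda$; this requires understanding the analytic continuation of $\Phi$ to the complex $t$-plane well enough (its double-exponential structure and the absence of a finite nearest singularity) to contradict the monotonicity conclusion of the criterion. This is precisely the content of the argument in \cite{New1}, and it is where the real work lies.
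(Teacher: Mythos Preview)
Your treatment of the up-set property (via the universal factor $e^{Bt^2}$, Theorem~\ref{UF}), the upper bound $\Lambda_{DN}\le 1/2$ (via Theorem~\ref{Gausssuf}), and the closedness of $\mathcal{P}_\Phi$ (via Hurwitz) is fine and matches the paper's framing. The substantive content of Theorem~\ref{finite} is the lower bound $\Lambda_{DN}>-\infty$, and here your proposal has a genuine gap.

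Your attempt to use Proposition~\ref{inftyzero} does not reach the conclusion. That proposition is a criterion for the existence of \emph{infinitely many real} zeros; its contrapositive says that if $H_\lambda$ has only finitely many real zeros then $|F_\lambda^{(n)}(it)|$ is eventually increasing. Neither direction speaks to the presence or absence of \emph{nonreal} zeros. You acknowledge this (``that is not yet a contradiction with `all zeros real'\,''), but then the subsequent paragraph does not supply a working argument: the appeal to ``order/growth'' in $\mathcal{LP}$ is not made precise, and the claimed behavior of $\Phi(iy)$ is not established (indeed, substituting $t=iy$ into~(\ref{Phi}) produces oscillatory terms $e^{-\pi n^2 e^{2iy}}$ that do not obviously yield the decay you assert). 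The sentence invoking ``Proposition~\ref{inftyzero}'s contrapositive at the level of nonreal zeros'' has no content, since the proposition makes no such statement.

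The paper's route is entirely different and is the actual content of \cite{New1}: one first proves Theorem~\ref{-infty}, the complete characterization of even measures $\rho$ for which $H_{\rho,\lambda}$ has only real zeros for \emph{every} real $\lambda$. Such $\rho$ are either two-point masses or have a density of the form~(\ref{DBNdensity}). One then shows $\Phi$ is not of this form by a decay comparison: the elementary inequality $(1+x^2)e^{-x^2}\ge e^{-x^4/2}$ forces any density~(\ref{DBNdensity}) to satisfy $f(t)\ge K t^{2m}\exp(-c t^4-\beta t^2)$ for some finite $c$, whereas $\Phi(t)$ decays like $\exp(-\pi e^{2|t|})$, i.e.\ double-exponentially. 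Hence $\Phi$ cannot be of the form~(\ref{DBNdensity}), so there exists some $\lambda$ with $H_\lambda\notin\mathcal{LP}$, giving $\Lambda_{DN}>-\infty$. This is the missing idea in your proposal.
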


The constant $\Lambda _{DN}$ is now known as the de Bruijn-Newman constant.
By definition, the Riemann Hypothesis is equivalent to
having the zeros of $H_{0}$
all purely real. Therefore it is equivalent to $\Lambda _{DN}\leq 0$%
. In \cite{New1}, the complementary conjecture was made that $\Lambda
_{DN}\geq 0$, 
as a quantitative version of the notion that 
``the Riemann Hypothesis, if true, is only barely
so\textquotedblright .

Theorem \ref{finite} was proved in \cite{New1} by giving a 
complete characterization of
all even probability measures $\rho $ such that for any $\lambda >0$ the
Fourier transform of $\exp \left( -\lambda t^{2}\right) d\rho \left(
t\right) $ has only real zeros, as stated in the next theorem. It may be of some
interest to note that the original motivation for this result was
provided more by statistical physics and Euclidean field theory, as
discussed in Section 2.5, than by analytic number theory.

\begin{theorem}
\label{-infty}Let $\rho$ be an even probability measure and let $H_{\rho,\lambda }$
be defined by~(\ref{Hrho}). Then $H_{\rho,\lambda }$ has only real zeros for all $%
\lambda \in \mathbb{R}$ if and only if either 
\[
d\rho\left( t\right) =\frac{1}{2}\left( \delta \left( t-t_{0}\right) +\delta
\left( t+t_{0}\right) \right) \text{, for some }t_{0}\geq 0\text{,} 
\]%
or  $d\rho = f(t) dt$ with 
\begin{equation}
f(t) = Kt^{2m}\exp \left( -\alpha t^{4}-\beta t^{2}\right) \prod_{j}\left[ \left( 1+%
\frac{t^{2}}{a_{j}^{2}}\right) e^{-\frac{t^{2}}{a_{j}^{2}}}\right] .
\label{DBNdensity}
\end{equation}%
Here $K>0$, $m=0,1,...,a_{j}>0$, $\sum \frac{1}{a_{j}^{4}}<\infty $, $%
\alpha >0$ and $\beta \in \mathbb{R}$ (or $\alpha =0$ and $\beta +\sum \frac{%
1}{a_{j}^{2}}>0$).
\end{theorem}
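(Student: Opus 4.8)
The plan is to prove the two implications separately, leaning on P\'olya's characterization of universal factors (Theorem~\ref{UF}) and on standard facts about the Laguerre--P\'olya class $\mathcal{LP}$ --- in particular that $\mathcal{LP}$ is closed under multiplication, under differentiation, and under locally uniform limits of its members. For sufficiency, the case $d\rho=\tfrac12(\delta(t-t_0)+\delta(t+t_0))$ is immediate: $H_{\rho,\lambda}(z)=e^{\lambda t_0^2}\cos(t_0 z)$, which has only real zeros for every $\lambda$. For $d\rho=f(t)\,dt$ with $f$ as in~(\ref{DBNdensity}), the first remark is that $e^{\lambda t^2}f(t)$ is again of the form~(\ref{DBNdensity}), with $\beta$ replaced by $\beta-\lambda$ and $\alpha$ unchanged, so it suffices to show $H_{\rho,0}=\widehat f\in\mathcal{LP}$. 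I would do this by truncating the infinite product to its first $J$ factors, calling the resulting density $f_J$. Absorbing the convergence factors $e^{-t^2/a_j^2}$ into the Gaussian, $f_J(t)=(\text{polynomial in }t^2)\,t^{2m}e^{-\alpha t^4-\beta_J t^2}$, whose Fourier transform lies in $\mathcal{LP}$: the transform of $e^{-\alpha t^4-\beta_J t^2}$ does, by P\'olya's results (see~(\ref{Polyaexample}) and Theorem~\ref{PRZ}), and multiplying the density by $t^{2m}$ and by each $1+t^2/a_j^2$ amounts to applying the universal factors $t^{2m}$ and $1+t^2/a_j^2$, which are universal because $(iz)^{2m}$ and $1-z^2/a_j^2$ are of the form~(\ref{LP}) (Theorem~\ref{UF}). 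Since $f_J\downarrow f$ pointwise with an integrable majorant, $\widehat{f_J}\to\widehat f$ locally uniformly on $\mathbb C$, and closure of $\mathcal{LP}$ under such limits gives $\widehat f\in\mathcal{LP}$.

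For necessity, suppose $H_{\rho,\lambda}$ is entire with only real zeros for every $\lambda\in\mathbb R$. Entireness for all $\lambda>0$ forces $\int e^{bt^2}\,d\rho<\infty$ for all $b$, so $\rho$ has all moments and each $H_{\rho,\lambda}$ is an even entire function of order at most $2$. I would next upgrade ``only real zeros'' to ``$\in\mathcal{LP}$'': since $\rho$ is even, $H_{\rho,\lambda}(iy)=\int\cosh(yt)e^{\lambda t^2}\,d\rho(t)$ is real, strictly positive, and grows at least like $e^{c|y|}$ for every $c$ below $\sup(\mathrm{supp}\,\rho)$; together with the order bound this excludes both a factor $e^{+cz^2}$ with $c>0$ and a genuinely genus-two canonical product in the Hadamard factorization, so for each $\lambda$
\[
H_{\rho,\lambda}(z)=\Bigl(\int e^{\lambda t^2}d\rho\Bigr)e^{-\mu(\lambda)z^2}\prod_k\Bigl(1-\frac{z^2}{x_k(\lambda)^2}\Bigr),\qquad \mu(\lambda)\ge0,\ x_k(\lambda)>0,\ \textstyle\sum_k x_k(\lambda)^{-2}<\infty.
\]
Now the hypothesis that this holds for \emph{all} $\lambda$ enters, through the backward heat equation $\partial_\lambda H_{\rho,\lambda}=-\partial_z^2H_{\rho,\lambda}$, which governs the motion of the zeros $x_k(\lambda)$ and of the exponent $\mu(\lambda)$. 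If $\rho$ has an atom at some $\pm t_0$, taking $t_0=\sup(\mathrm{supp}\,\rho)$ and letting $\lambda\to+\infty$ makes the term $e^{\lambda t_0^2}\cos(t_0z)$ dominant, and one argues that any additional mass would produce nonreal zeros for large $\lambda$, forcing $\rho=\tfrac12(\delta(t-t_0)+\delta(t+t_0))$. If $\rho$ has no atom, one shows $\rho$ has a density $f$ and reads off its form: the demand that a representative with only real zeros survive as $\lambda\to-\infty$ --- where $e^{\lambda t^2}d\rho$ concentrates and the zeros $x_k(\lambda)$ escape to infinity --- forces $\mu(\lambda)$ to be affine in $\lambda$ and the zero configuration to evolve rigidly, and inverting the Fourier transform turns this into exactly the structure $Kt^{2m}e^{-\alpha t^4-\beta t^2}\prod_j(1+t^2/a_j^2)e^{-t^2/a_j^2}$ of~(\ref{DBNdensity}).

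The main obstacle is this last rigidity step: showing that a trajectory $\{H_{\rho,\lambda}\}_{\lambda\in\mathbb R}$ of the backward heat flow consisting entirely of Fourier transforms of positive even measures with only real zeros cannot exist unless $\rho$ has the stated form. The Gaussian universal-factor estimate (Theorem~\ref{Gausssuf}) controls the flow as $\lambda$ increases but gives nothing as $\lambda\to-\infty$, so the real content is the analysis of that limit together with the constraints on the Hadamard data along the whole trajectory. A secondary, purely technical, point to handle with care is the domain of definition in the $\alpha=0$ case, where $H_{\rho,\lambda}$ need only be entire on a half-line of $\lambda$'s and ``for all $\lambda$'' must be read accordingly.
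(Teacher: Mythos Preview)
The paper does not prove Theorem~\ref{-infty}; it is quoted from~\cite{New1}, and the only argument the paper supplies in its vicinity is the paragraph after the statement explaining how Theorem~\ref{finite} follows from it (by checking that $\Phi$ decays too fast to be of the form~(\ref{DBNdensity})). So there is no proof in the paper to compare against, and your proposal has to be assessed on its own.

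Your sufficiency direction is essentially sound. The reduction to $\lambda=0$ via $\beta\mapsto\beta-\lambda$, the appeal to~(\ref{Polyaexample}) for the base density $e^{-\alpha t^4-\beta_J t^2}$, the use of Theorem~\ref{UF} to handle the polynomial factors $t^{2m}$ and $1+t^2/a_j^2$ as universal factors, and the passage to the limit by dominated convergence and closure of $\mathcal{LP}$ all work. The only care needed is in the $\alpha=0$ case: the naive majorant $Kt^{2m}e^{-\beta t^2}$ may fail to be integrable when $\beta\le0$, so you should keep enough of the factors $e^{-t^2/a_j^2}$ in the base before truncating.

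The necessity direction, however, has a real gap --- and you flag it yourself. Everything hinges on the ``rigidity step,'' and what you have written there is a heuristic, not an argument. Concretely: (i) the atomic case --- ``any additional mass would produce nonreal zeros for large $\lambda$'' --- is asserted without a mechanism; why can a symmetric four-point mass, say, not survive? (ii) In the continuous case, the claim that persistence of real zeros as $\lambda\to-\infty$ forces $\mu(\lambda)$ to be affine and the zero configuration to ``evolve rigidly'' is not derived from anything; the zero dynamics~(\ref{zerodynamic}) are a coupled nonlinear system, and you give no reason why its only solutions global in $\lambda\in(-\infty,\infty)$ are the ones coming from~(\ref{DBNdensity}). (iii) Even granting that, ``inverting the Fourier transform turns this into exactly the structure~(\ref{DBNdensity})'' is a black box --- in particular, the appearance of the quartic $e^{-\alpha t^4}$ and of the genus condition $\sum a_j^{-4}<\infty$ on the \emph{density} side is not explained by anything on the transform side that you have written down. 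As it stands, the necessity half is an outline of a program rather than a proof; the argument in~\cite{New1} proceeds along different and more direct lines, and you would need either to consult it or to supply substantial new content to close this gap.
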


Theorem \ref{finite} follows from Theorem \ref{-infty} by showing that $\Phi(t)$ is not of the
form \eqref{DBNdensity}. This was done in \cite{New1} by using the elementary inequality
\begin{equation*}
(1+x^2)e^{-x^2} \ge e^{-x^4/2}
\end{equation*}
for real $x$, to show that $f(t)$ given by \eqref{DBNdensity} satisfies
\begin{equation*}
f(t) \geq K t^{2m} \exp {(- [\alpha + \sum_j 1/(2a_j^4)]t^4
- \beta t^2)} ,
\end{equation*}
which decays much more slowly as $|t| \to \infty$ than does $\Phi(t)$. 

When $\alpha = 0$ in the above expression for $f(t)$, 
$H_{f,\lambda}$  is required to have only real zeros for
those $\lambda$ such that $H_{f,\lambda}$ is an entire function. E.g., if
there is only a finite product over $j$ in~(\ref{DBNdensity}), then those
are the $\lambda$ in $(-\infty, \beta + \sum (1/\alpha_j ^2) )$.

It is worth noting that the density (\ref{DBNdensity}) is reminiscent
of the Laguerre-P\'olya class (\ref{LP}). Also, (\ref{DBNdensity}) has a
natural physical interpretation --- the theorem implies that in
order to have $\Lambda _{DN}=-\infty $, $f$ can only be either a discrete
density with two point masses, or a density that is a kind of perturbation of the
$\phi ^{4}$ measure (i.e., the $q=1$ case of \eqref{Polyaexample}). See the next section for more details.

\subsection{Zeros of the partition function in statistical mechanics and
quantum field theory}
\label{subsec:statmech}

In this subsection we take a detour and discuss examples of probability
distributions arising in statistical mechanics and Euclidean (quantum) field theory,
whose Laplace transform has only pure imaginary zeros. In the 1950s,
Yang and Lee 
studied partition functions of a lattice gas with a variable
chemical potential, or equivalently, of an Ising model in a 
variable external magnetic field 
\cite{LY1, LY2}. They discovered a striking property
for the locations of zeros of the Ising partition function --- all the roots 
in the complex plane for the magnetic field variable lie
on the imaginary axis. To be more precise, consider a collection of
${\pm 1}$--valued 
random variables $\left\{ X_{j}\right\} _{j=1}^{N}$ 
whose joint distribution is given by the Gibbs
measure 
\begin{equation}
d\mu = K \,
\exp \left( \sum_{i,j=1}^{N}J_{ij}x_{i}x_{j}\right)
\prod_{j=1}^{N} 
(\delta (x_j -1) + \delta (x_j +1)) ,
\label{Ising}
\end{equation}%
where $J_{ij}\geq 0$ and $K$ is the normalizing constant that makes (\ref%
{Ising}) a probability measure; then 
\[
\int \exp \left( it\sum_{i=1}^{N}\lambda _{i}x_{i}\right) d\mu \text{, \ }%
\lambda _{i}\geq 0,
\]%
as a function of complex $t$ has only real zeros. The original proof, in \cite{LY2}, expressed this transform as a multinomial
in the variables $\{e^{\pm i t \lambda_j} \}$ and used fractional
linear transformation arguments. Quite different proofs may be found, e.g.,
in Sec. 3 of \cite{New2} and in  \cite{LS}.


The result of Lee and Yang has been extended to more general Ising-type models 
in which $\delta(x_j - 1) + \delta (x_j + 1)$ is replaced by some
other measure $d \mu_0 (x)$. For example, in the study of lattice
$\phi ^4$ Euclidean field theories, $d \mu_0 (\phi)$ would be of the form
$\exp (-a \phi^4 + b \phi ^2) d \phi$; a good general reference for this
topic is \cite{FFS}.
One of the extensions of the Lee-Yang result, established in \cite{New2} (see also \cite{LS}), is as follows. 

\begin{theorem}
\label{genIsing}Suppose the Hamiltonian, $H\left( S \right)$ for
$S \in \mathbb{R}^N$ is 
\[
H\left( S \right) = - \sum_{i,j=1}^{N}J_{ij}S_{i}S_{j} 
\]%
with $J_{ij}=J_{ji}\geq 0$, $\forall i,j$, and $\mu _{0}$ is an arbitrary
signed measure on $\mathbb{R}$ that is even or odd, has the property that $%
\int_{-\infty }^{\infty }e^{bs^{2}}\,d\left\vert \mu _{0}\left( s
\right) \right\vert <\infty $ for all~$b$, and satisfies the condition 
\[
\int_{-\infty }^{\infty }e^{hs }\,d\mu _{0}\left( s \right) \neq 0%
\text{, \ \ if }\Re{h}>0 \, ;
\]%
then, for all $\beta \geq 0$, 
\[
\int e^{\beta \sum h_{i}s_{i}}e^{-\beta H\left( S \right) }\prod
\,_{i=1}^{N}d\mu _{0}\left( s_{i}\right) \neq 0\text{, \ \ if }\Re%
{h_{i}}>0\text{ }\forall i. 
\]%
It follows that 
\[
F\left( z\right) :=\int e^{iz\sum \lambda _{i}s_{i}}e^{-\beta H\left(
S \right) }\prod \,_{i=1}^{N}d\mu _{0}\left( s_{i}\right) \text{,
\ with} \, \lambda _{i}\geq 0 \,  \forall i
\]%
has only real zeros. If the integral property above is only valid for
small $b>0$, then the conclusions remain valid, but only for small $\beta>0$.
\end{theorem}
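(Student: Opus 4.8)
The plan is to deduce the final statement (Theorem~\ref{genIsing}) from the Lee--Yang property of the single-site measure $\mu_0$, propagated through the ferromagnetic coupling by the classical Asano--Ruelle contraction argument. First I would observe that the two displayed conclusions are connected by a standard substitution: given the nonvanishing of $\int e^{\beta \sum h_i s_i} e^{-\beta H(S)} \prod d\mu_0(s_i)$ whenever $\Re h_i > 0$ for all $i$, one recovers the statement that $F(z)$ has only real zeros by setting $h_i = i z \lambda_i$; indeed if $z = x + i y$ with $y < 0$ then $\Re(i z \lambda_i) = -y\lambda_i \ge 0$ with at least one strict (when some $\lambda_i>0$), so $F(z)\neq 0$ there, and the case $\Im z > 0$ follows from the evenness/oddness of $\mu_0$ and the reality of $J_{ij}$, which force $\overline{F(z)} = \pm F(-\bar z)$. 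So the whole theorem reduces to the nonvanishing assertion about the ``half-plane'' product integral.

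For the nonvanishing assertion I would argue by induction on $N$. The base case $N=1$ is exactly the hypothesis $\int e^{h s}\,d\mu_0(s)\neq 0$ for $\Re h > 0$. For the inductive step, I would first treat the decoupled case $J_{ij}=0$, where the integral factorizes into a product of single-site integrals each of which is nonzero by hypothesis; this is the ``initial'' multiaffine polynomial in the variables $w_i := e^{h_i}$ (or, after expanding $e^{-\beta H}$, in suitable exponential variables), and it has the Lee--Yang property: it is nonvanishing when all $\Re h_i > 0$. Then I would introduce the ferromagnetic interaction one bond at a time. The key algebraic fact is that multiplying by $\exp(2\beta J_{ij} s_i s_j)$ and integrating is, after expanding this factor as $\cosh + \sinh$ and using that $s_i^2, s_j^2$ are governed by even/odd symmetry, equivalent to an \emph{Asano contraction}: a map on multiaffine polynomials that, given two variables each lying in a half-plane-type zero-free region, produces a polynomial zero-free in the product region. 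The standard lemma (Asano, as refined by Ruelle) is that if $P(u,v) = A + Bu + Cv + Duv$ has no zeros with $u,v$ in the open right half-plane, and one forms $\tilde P = A + Duv'$ type contractions, the resulting polynomial inherits a half-plane-free region; iterating over all bonds $\{i,j\}$ with $J_{ij}>0$ and all pairs preserves nonvanishing for $\Re h_i > 0$.

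The main obstacle I expect is making the Asano--Ruelle contraction step rigorous in the present \emph{integral} (rather than finite-polynomial) setting: the single-site ``partition function'' $g_i(h_i) := \int e^{h_i s} d\mu_0(s)$ is an entire function, not a polynomial, so one cannot literally cite the finite-dimensional Asano lemma. I would handle this either (a) by approximation --- truncate $\mu_0$ and discretize so that each $g_i$ becomes a genuine polynomial (or finite exponential sum) with the half-plane-free property, apply the finite Asano argument, and pass to the limit using that the class of functions nonvanishing on an open set is closed under locally uniform limits (Hurwitz), together with the moment bound $\int e^{bs^2} d|\mu_0| < \infty$ to control the convergence; or (b) by directly verifying the contraction inequality for the Laplace transforms, using that $h \mapsto g_i(h)$ maps the right half-plane into a region avoiding the origin and that the ferromagnetic factor $e^{2\beta J_{ij} s_i s_j}$ has positive coefficients in its $\cosh$/$\sinh$ expansion. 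Finally, the last sentence of the theorem --- validity only for small $\beta$ when $\int e^{bs^2}d|\mu_0|<\infty$ holds only for small $b$ --- follows because the convergence of all the integrals above, and hence the whole argument, requires $\beta$ times the relevant interaction strengths to stay within the range of $b$ for which $\mu_0$ has finite Gaussian moments; I would simply track that constraint through the induction.
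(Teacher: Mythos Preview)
The paper does not actually prove Theorem~\ref{genIsing}; it only states the result and cites \cite{New2} (and \cite{LS}) for the proof. So there is no in-paper argument to compare against, only the methods of those references.

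Your reduction of the ``only real zeros'' conclusion to the half-plane nonvanishing statement is essentially right, modulo the small point that when some $\lambda_i=0$ the substitution yields $\Re h_i=0$ rather than $\Re h_i>0$; this is repaired by replacing $\lambda_i$ with $\lambda_i+\epsilon$ and letting $\epsilon\downarrow 0$ via Hurwitz.

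The genuine gap is in your inductive step. Classical Asano--Ruelle contraction applies to \emph{multiaffine} polynomials, and that structure is available in the $\pm 1$ Ising case precisely because $s_i^2=1$, so that $e^{2\beta J_{ij}s_is_j}=\cosh(2\beta J_{ij})+s_is_j\sinh(2\beta J_{ij})$ is affine in each spin. For a general $\mu_0$ the spins are unbounded reals and no such reduction exists; your phrase ``$s_i^2,s_j^2$ are governed by even/odd symmetry'' does not recover affineness. Your approximation route~(a) has the same problem one level down: an arbitrary discretization or truncation of $\mu_0$ need not inherit the single-site hypothesis $\int e^{hs}\,d\mu_0(s)\neq 0$ for $\Re h>0$, so you cannot simply discretize, run finite Asano, and pass to the limit.

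The proofs in the cited references bypass Asano contraction. Newman's argument in \cite{New2} uses the Hadamard product representation of the single-site transform $g(h)=\int e^{hs}\,d\mu_0(s)$ (entire of order at most $2$, zeros only on the imaginary axis by the hypothesis and the even/odd symmetry) and proves directly a two-site lemma---that for $J\ge 0$ the function $(h_1,h_2)\mapsto\iint e^{h_1s_1+h_2s_2+Js_1s_2}\,d\mu_0(s_1)\,d\mu_0(s_2)$ is nonvanishing when $\Re h_1,\Re h_2>0$---which is then iterated. The Lieb--Sokal approach \cite{LS} is closer to P\'olya's universal-factor theory: write the interaction as the differential operator $e^{\beta J_{ij}\partial_{h_i}\partial_{h_j}}$ acting on $\prod_k g(h_k)$ and show that such operators (with nonnegative coefficient) preserve the multivariate half-plane nonvanishing property. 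Either route would close the gap in your sketch; the Asano mechanism as you describe it does not.
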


Lee-Yang type theorems also arise in the context of constructive quantum
field theory. Indeed, an important special case of Theorem \ref{genIsing} is 
when
\[
d\mu _{0}\left( s \right) =\exp \left( -as ^{4}-bs
^{2}\right) d s ,\text{ }a>0\text{ and }b\in \mathbb{R}\text{.} 
\]%
Theorem \ref{genIsing} when combined with P\'olya's result
about (\ref{Polyaexample}) with $q=1$ then implies the 
Lee-Yang property for continuum $%
\left( \phi ^{4}\right) $ fields. This was actually first established 
by Simon and
Griffiths \cite{SG} in a different way 
using an approximation by classical Ising models. In the
same paper, they also showed the Lee-Yang property fails for some measures of
the form 
\[
d\mu \left( s \right) =\exp \left( -as ^{6}-bs ^{4}-cs
^{2}\right) ds ,\text{ }a>0. 
\]


%

\subsection{Lower bounds for $\Lambda _{DN}$ and the proof of the
\cite{New1} conjecture that $\Lambda _{DN} \geq 0$}
\label{subsec:newman}

Improvements to the \cite{New1} result that $\Lambda _{DN} > -\infty$ have been made
since the late 1980s, giving a series of quantitative lower bounds for $\Lambda
_{DN}$ that supported the conjecture that $\Lambda _{DN}\geq 0$. 
The first result in this direction was by
Csordas, Norfolk and Varga \cite{CNV}, who proved $\Lambda _{DN}\geq -50$.
This was followed by \cite{Riel, NRV, CRV} and then a paper 
by Csordas, Smith and Varga 
\cite{CSV}, which showed $\Lambda _{DN}\geq
-4.379\times 10^{-6}$. There were further improvements
in \cite{COSV, Od}, and then by Saouter, Gourdon, and Demichel \ 
\cite{SGD} who showed $\Lambda _{DN}\geq -1.15\times 10^{-11}$.

We briefly summarize the methodology behind these proofs, in particular the
approach of \cite{CSV}. Assume for now $\Lambda _{DN}<0$ and so that also the Riemann
Hypothesis holds. The lower bound for $\Lambda _{DN}$ in \cite{CSV} was
established by exploiting the following repulsion phenomenon: if $\Lambda
_{DN}$ were significantly less than zero, then adjacent zeros of $H_{0}$ (or $%
\xi $) could not be too close to each other, which would contradict 
known facts about close pairs of zeros of $\xi $. More precisely, let $%
0<x_{1}<x_{2}<...$ be the non-negative zeros of $\xi $, and let $x_{-j}=-x_{j}$.
\ Define%
\[
g_{k}:=\sum_{j\neq k,k+1}\left[ \frac{1}{\left( x_{k}-x_{j}\right) ^{2}}+%
\frac{1}{\left( x_{k+1}-x_{j}\right) ^{2}}\right] . 
\]%
The following lower bound is established in \cite{CSV}.

\begin{theorem}
The de Brujin-Newman constant satisfies%
\[
\forall k , \,
\Lambda _{DN}\geq \lambda _{k}\text{, } 
\]%
where 
\[
\lambda _{k}:=\frac{\left( 1-\frac{5}{4}\left( x_{k+1}-x_{k}\right)
^{2}g_{k}\right) ^{4/5}-1}{8g_{k}}. 
\]
\end{theorem}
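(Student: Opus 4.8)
The plan is to establish a local repulsion estimate: if $H_\lambda$ has only real zeros for a given $\lambda$, then the zeros of $H_\lambda$ cannot crowd too tightly, and then track how the zeros of $\xi = H_0$ move as $\lambda$ decreases from $0$. The engine is the heat-flow (or backward-heat-flow) interpretation of the parameter $\lambda$: differentiating $H_\lambda(z) = \int e^{\lambda t^2} \Phi(t) e^{izt}\,dt$ in $\lambda$ is the same as differentiating twice in $z$, so $\partial_\lambda H_\lambda = -\partial_z^2 H_\lambda$. Consequently the zeros $z_j(\lambda)$ of $H_\lambda$ evolve, as $\lambda$ decreases, according to a gradient-type ODE whose right-hand side involves $\sum_{j\neq k}(z_k - z_j)^{-1}$-style sums; while all zeros remain real this is exactly the (attractive, time-reversed) Calogero--Moser / inviscid-Burgers dynamics that has been used throughout the $\Lambda_{DN}$ literature. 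The strategy is to run this dynamics backward from $\lambda = 0$ and show that if $\lambda_k > \Lambda_{DN}$ (i.e. if the zeros stay real down past $\lambda_k$) then, by the time the parameter reaches $0$, the pair $x_k, x_{k+1}$ would have been forced apart by more than the known spacing $x_{k+1}-x_k$ actually is --- a contradiction. Turning this around yields $\Lambda_{DN} \ge \lambda_k$.

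\textbf{Key steps, in order.} First I would record the zero-dynamics identity: if all zeros of $H_\lambda$ are simple and real on an interval of $\lambda$-values, then $\dot z_k = \tfrac{dz_k}{d\lambda}$ is governed (up to the normalization conventions of the $e^{\lambda t^2}$ versus $e^{\lambda t^2/2}$ choice) by an explicit formula in terms of the sums $\sum_{j} (z_k - z_j)^{-2}$, obtainable by differentiating $H_\lambda(z_k(\lambda)) \equiv 0$ twice and using $\partial_\lambda H = -\partial_z^2 H$. Second, I would isolate the contribution of the two ``near'' zeros $x_k, x_{k+1}$ from the ``far'' ones: the far zeros contribute a bounded amount controlled precisely by $g_k$, while the pair $x_k, x_{k+1}$ contributes a singular repulsion term $\sim (x_{k+1}-x_k)^{-2}$. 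Third, I would set up a differential inequality for the gap $\Delta_k(\lambda) := x_{k+1}(\lambda) - x_k(\lambda)$ of the form (schematically) $\dot\Delta_k \le -\,c/\Delta_k + (\text{term involving } g_k)$ valid as long as the zeros stay real, so that as $\lambda$ decreases the gap is \emph{forced to grow} at a rate bounded below. Fourth, integrating this inequality from $\lambda = \lambda_k$ up to $\lambda = 0$ gives a lower bound on $\Delta_k(0) = x_{k+1}-x_k$ in terms of $\lambda_k$ and $g_k$; solving that relation for $\lambda_k$ produces exactly the stated expression $\lambda_k = \bigl[(1 - \tfrac54 (x_{k+1}-x_k)^2 g_k)^{4/5} - 1\bigr]/(8 g_k)$, the fractional powers $4/5$ and $1/5$ being the signature of integrating a $\dot\Delta \sim \Delta^{-?}$-type ODE with the far-field correction. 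Fifth, since the inequality $\Lambda_{DN} \ge \lambda_k$ must hold for every $k$ (if the zeros failed to be real already at some $\lambda \in (\lambda_k, 0)$ the argument applies on the shorter interval and only strengthens the conclusion, while if they are real all the way the contradiction bites), the theorem follows.

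\textbf{Main obstacle.} The hard part will be making the differential inequality for the gap rigorous \emph{uniformly in the far zeros} --- i.e. controlling $\sum_{j \neq k, k+1} (x_k - x_j)^{-2}$ and the analogous sum at $x_{k+1}$ by $g_k$ not just at $\lambda = 0$ but along the entire backward trajectory $\lambda \in [\lambda_k, 0]$, since the far zeros also move. One needs a monotonicity or a priori bound ensuring the far-field sum does not blow up before the parameter reaches $0$; this is where one invokes that while all zeros are real the dynamics is order-preserving and the relevant sums are controlled (the zeros of the backward flow spread out, so $\sum (x_k - x_j)^{-2}$ can only decrease as $\lambda$ decreases, making the $\lambda = 0$ value $g_k$ the worst case). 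A secondary technical point is justifying the zero-dynamics ODE itself --- simplicity of zeros, differentiability of $z_k(\lambda)$, and interchange of sum and derivative --- which requires the decay estimates on $\Phi$ (faster than any $e^{-|t|^{2+\alpha}}$ fails, but $\Phi$ decays doubly-exponentially, so $H_\lambda$ is entire of order $<1$ for $\lambda$ near $0$ and its zero-counting function is well enough behaved for these manipulations). Once uniform control of $g_k$ along the flow is in hand, the rest is the ODE integration and algebra indicated above.
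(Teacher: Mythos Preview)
Your overall architecture---backward heat equation $\partial_\lambda H_\lambda=-\partial_z^2 H_\lambda$, the induced first-order dynamics for the zeros, a differential inequality for the gap $\Delta_k(\lambda)=x_{k+1}(\lambda)-x_k(\lambda)$, then an integration---is exactly the Csordas--Smith--Varga strategy that the paper summarizes. But you have the direction of the flow reversed throughout. From $\dot x_k=2\sum_{j\ne k}(x_k-x_j)^{-1}$ (the paper's equation~(\ref{zerodynamic})) one gets
\[
\dot\Delta_k \;=\; \frac{4}{\Delta_k}\;-\;2\Delta_k\!\!\sum_{j\ne k,k+1}\frac{1}{(x_{k+1}-x_j)(x_k-x_j)},
\]
and the far sum is strictly positive. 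So the near-field contribution is $+4/\Delta_k$, not $-c/\Delta_k$: as $\lambda$ \emph{increases} the zeros repel and gaps widen; as $\lambda$ \emph{decreases} from $0$ the zeros attract, gaps shrink, and the Lehmer pair is driven toward collision. The correct logic is to run $\lambda$ downward from $0$, show the pair $x_k,x_{k+1}$ must collide (go complex) by a computable parameter value $\lambda_k$, and conclude $\Lambda_{DN}\ge\lambda_k$.

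This reversal is fatal precisely at your ``main obstacle.'' You assert that under the backward flow the zeros spread out, so $\sum_{j\ne k,k+1}(x_k-x_j)^{-2}$ decreases as $\lambda$ decreases and $g_k=g_k(0)$ is the worst case. The opposite holds: decreasing $\lambda$ is the attractive direction, the zeros crowd together, and $g_k(\lambda)$ becomes \emph{larger} for $\lambda<0$; thus $g_k(0)$ is the best case on $[\lambda_k,0]$, not an upper bound. Producing a bound on the far-field along the whole backward trajectory in terms of the single endpoint value $g_k(0)$ is exactly the nontrivial content of \cite{CSV}; their monotonicity lemma says $g_k(\lambda)$ is non-increasing in $\lambda$, which goes against you, and they must combine it with the gap ODE in a more careful way (this coupling is what produces the $4/5$ exponent rather than the log or square-root one would get from a naive worst-case substitution). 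You have correctly located the hard step but resolved it with the sign flipped; as written, the argument does not close.
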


Then lower bounds for $\Lambda _{DN}$ can be obtained by studying 
``Lehmer pairs" of zeros \cite{Le}, which are, roughly speaking,
two consecutive simple zeros on
the critical line that are exceptionally close to each other. The results of 
\cite{CSV} and \cite{SGD} were obtained by first numerically locating Lehmer
pairs of increasingly high quality. In principle, by locating infinitely
many Lehmer pairs with arbitrary close gaps, the method of \cite{CSV} could
verify the conjecture $\Lambda _{DN}\geq 0$. However, known
upper bounds on gaps of zeros of $\xi $ were not sufficient to make that
strategy work.

More recently, Rodgers and Tao \cite{RT} proved that $\Lambda _{DN}\geq 0$,
thus confirming the Newman's conjecture. Instead of looking at individual
pairs of zeros, they focused on zeros in intervals of the type $\left[
T,T+\alpha \right] $, where $1 \ll \alpha \ll \log T$ and proved that they exhibit a
kind of relaxation to local equilibrium --- if $\Lambda _{DN}<0$, then
the zeros of $H_{0}$ would be arranged locally as an approximate arithmetic
progression. To illustrate, observe that the function 
\[
H_{\lambda }\left( z\right) =\int_{-\infty }^{\infty }e^{\lambda t^{2}}\Phi
\left( t\right) e^{izt}\,dt 
\]%
satisfies the backward heat equation, 
\begin{equation}
\partial _{t}H_{t}=-\partial _{zz}^{2}H_{t},  \label{BHE}
\end{equation}%
with terminal condition $H_{0}$. This was noted, perhaps for the first time, in \cite{CSV},
where it was also observed that for $t > \Lambda_{DN}$,
the real zeros $x_k(t)$ obey the system of ODEs,
\begin{equation}
\label{zerodynamic}
\partial _{t}x_k=2 \sum_{j\neq k} \frac1{x_k - x_j}.
\end{equation}
A one-parameter family of solutions
to (\ref{BHE}), that are 
called the equilibrium state,
is given by $H_{t}\left( z\right) =e^{tu^{2}}\cos zu$, for $%
u>0$, whose zeros are all arranged as an arithmetic progression $\left\{ \frac{%
2\pi \left( k+\frac{1}{2}\right) }{u}:k\in \mathbb{Z}\right\} $. As discussed in Section 4 of \cite{RT}, \eqref{zerodynamic} is reminiscent
of Dyson Brownian motion and of similar behavior that was studied by Erdos, Schlein and Yau, in the context of
gradient flow for the eigenvalues of random matrices \cite{ESY}.


We now sketch some of the main steps in \cite{RT}. Assuming that $\Lambda _{DN}<0$
(and hence also assuming the validity of RH),
their goal was to show a contradiction by obtaining 
some control on the distribution of the zeros of $H_{t}$
when $\Lambda _{DN}/2\leq t\leq 0$. To do this, they exploit an observation in \cite{CSV} that the
dynamics for $H_{t}$ induces a gradient flow for the zeros $\left\{ x_{j}\left(
t\right) \right\} $. Indeed, for $\Lambda _{DN}<t\leq 0$, one can define a Hamiltonian by 
\[
\mathcal{H}\left( t\right) \mathcal{=}\sum_{j,k:j\neq k}\log \frac{1}{%
\left\vert x_{j}\left( t\right) -x_{k}\left( t\right) \right\vert }
\]%
and then view the evolution of $H_{t}$ as the gradient flow of 
$\mathcal{H}$,
\begin{equation}
\partial _{t}\mathcal{H}\left( t\right) \mathcal{=}-4E\left( t\right) \, ,
\label{gradflow}
\end{equation}%
where $E\left( t\right) $ is defined, roughly speaking, as 
\begin{equation}
E\left( t\right) =\sum_{j\neq k}\frac{1}{\left\vert x_{j}\left( t\right)
-x_{k}\left( t\right) \right\vert ^{2}}.  \label{energy}
\end{equation}%
Since~(\ref{energy}) is only a formal sum,
Rodgers and Tao studied a mollified version 
and used that together with~(\ref{gradflow}) to show
that $\{x_j (t)\}$ at $t=0$ would satisfy
\[
x_{j+1}\left( 0\right) -x_{j}\left( 0\right) =\frac{4\pi +o_{T}\left(
1\right) }{\log T}\text{, if }j\in \left[ T\log T,2T\log T\right], 
\]%
for a fraction $1 -o(1)$ of these zeros --- see Sec. 9 of \cite{RT}. This would imply (under the assumption $\Lambda_{DN} < 0$) that the spacings 
between the zeros of the zeta function are
rarely much larger or much smaller than in an arithmetic progression. However
this would contradict a consequence of a result of Montgomery \cite{Mon}, who analyzed the
pair correlations for the zeros assuming the Riemann Hypothesis. The results of \cite{Mon} imply 
that a positive proportion of the spacings  
are strictly smaller than the mean spacing; see also \cite{CGG}. That 
completes our very rough outline of their proof.

\subsection{The upper bound for $\Lambda _{DN}$}
\label{subsec:KKL}

Recall that the Riemann Hypothesis is equivalent to the inequality $\Lambda
_{DN}\leq 0$: Based on the properties of universal multipliers, de Bruijn
proved the upper bound $\Lambda _{DN}\leq 1/2$. This has been improved more
recently by Ki, Kim and Lee to $\Lambda _{DN}<1/2$ \cite{KKL} along
with other results, as we
now discuss.

The starting observation of \cite{KKL} is that $S\left( t\right) =e^{\lambda
t^{2}}$ is indeed a strong universal multiplier in de Bruijn's sense,
namely, if for some $\varepsilon >0$, all but a finite number of roots of (%
\ref{FF}) lie in the strip $\left\vert \Im{z}\right\vert \leq
\varepsilon $, then for any $\lambda > 0$, 
the function (\ref{Gsuf}) has only a finite number of
non-real roots. A further observation by \cite{KKL} is that for certain 
functions $F$ (roughly speaking, those where the number of non-real zeros 
of~(\ref{FF}) is less
than the number of real zeros), the equality in (\ref{strip}) cannot be
reached. More precisely:

\begin{theorem}
\label{strictsuf}Suppose that $f\left( z\right) =\int_{-\infty }^{\infty
}F\left( t\right) e^{izt}\,dt$ is a real entire function of order less than~2,
$f$ has finitely many non-real zeros, and the number of non-real zeros of 
$f$ in the upper half plane does not exceed the number of real zeros. 
Suppose also that $\Delta _{0}>0$ and the zeros of $f$ lie in the strip $%
\left\{ z:\left\vert \Im{z}\right\vert \leq \Delta _{0}\right\} $. If $%
\lambda \in \left( 0,\Delta _{0}^{2}/2\right) $, then the zeros of 
\[
\int_{-\infty }^{\infty }F\left( t\right) e^{\lambda t^{2}}e^{izt}\,dt 
\]%
lie in $\left\{ z:\left\vert \Im{z}\right\vert \leq \Delta _{1}\right\} $
for\ some $\Delta _{1}<\sqrt{\Delta _{0}^{2}-2\lambda }$.
\end{theorem}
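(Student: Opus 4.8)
We need to prove a strict version of de Bruijn's quantitative bound (Theorem \ref{Gausssuf}): under extra hypotheses on $f$ (order less than 2, finitely many non-real zeros, and at most as many non-real zeros in the upper half plane as real zeros), the zero-strip of the Gaussian-smoothed transform is strictly smaller than $\sqrt{\Delta_0^2 - 2\lambda}$.

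Key steps. First I would recall the classical mechanism: if $f(z) = \int F(t) e^{izt}\,dt$ has all zeros in $|\Im z| \le \Delta_0$, then by the Hadamard factorization (valid since $f$ has order $<2$) we can write $f(z) = e^{-az^2+bz+c}\prod_k (1 - z/z_k)e^{z/z_k}$ with $\Im z_k$ bounded, and the Gaussian-smoothed function equals $f_\lambda(z) := \int F(t) e^{\lambda t^2}e^{izt}\,dt$, which is (up to normalization) obtained by running the backward heat flow $\partial_\lambda f_\lambda = -\partial_{zz}^2 f_\lambda$ for "time" $\lambda$. The standard proof that the zeros lie in $|\Im z| \le \sqrt{\Delta_0^2 - 2\lambda}$ proceeds by showing that $\log|f_\lambda|$ is subharmonic and using a Phragmén–Lindelöf / maximum-modulus argument on the function $e^{-\lambda z^2/?}\cdot(\text{something})$ — more precisely one tracks the imaginary parts of the zeros under the flow. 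The zeros $z_k(\lambda)$ of $f_\lambda$ satisfy the de Bruijn–type ODE $\dot z_k = 2\sum_{j\ne k} 1/(z_k - z_j)$ (the complex analogue of \eqref{zerodynamic}), and one shows $\frac{d}{d\lambda}(\Im z_k)^2 \le -2$ as long as the zero is genuinely non-real, which integrates to give $(\Im z_k(\lambda))^2 \le \Delta_0^2 - 2\lambda$.

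The strict improvement. The plan is to show the inequality $\frac{d}{d\lambda}(\Im z_k)^2 \le -2$ is \emph{strict} — in fact $\frac{d}{d\lambda}(\Im z_k)^2 = -2 - (\text{repulsion term from other non-real zeros and their conjugates})$, and the extra term is strictly negative unless the configuration of non-real zeros is degenerate. Writing $z_k = x_k + iy_k$, a computation gives
\[
\frac{d}{d\lambda} y_k^2 \;=\; -2 \;-\; 4y_k \sum_{j \ne k} \Im\!\left(\frac{1}{z_k - z_j}\right)\cdot y_k \;=\; -2 \;-\; 4 y_k^2 \sum_{j\ne k} \frac{y_k - y_j}{|z_k - z_j|^2},
\]
so the extra dissipation is controlled by how the non-real zeros (and, crucially, their complex conjugates, which are also zeros since $f$ is real) are arranged. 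Here is where the hypothesis "number of non-real zeros in the upper half plane does not exceed the number of real zeros" enters: it guarantees that each non-real zero $z_k$ in the upper half plane feels a net \emph{downward} push, because the real zeros on the axis (which outnumber or match the non-real ones above) collectively attract it toward the real line, and this push is quantitatively bounded below away from zero on the relevant compact time interval $\lambda \in (0, \Delta_0^2/2)$ since there are only finitely many non-real zeros and they cannot escape to infinity (order $<2$ plus finitely many non-real zeros pins them in a bounded set). Integrating the strict inequality over $[0,\lambda]$ yields $y_k(\lambda)^2 < \Delta_0^2 - 2\lambda$ for every $k$, and since there are finitely many non-real zeros one takes $\Delta_1 := \max_k |y_k(\lambda)| < \sqrt{\Delta_0^2 - 2\lambda}$.

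Main obstacle. The delicate point will be making the ODE argument for the zeros rigorous when zeros collide or when a complex-conjugate pair of zeros merges onto the real axis (i.e. a non-real zero becomes real during the flow) — at such times $\dot z_k$ blows up and the naive ODE breaks down. The resolution, following the approach used for Theorem \ref{Gausssuf} and refined in \cite{KKL}, is to work instead with a monotone functional such as $\sup_k y_k(\lambda)^2 + 2\lambda$ and show it is non-increasing using Hurwitz's theorem / continuity of zeros plus the subharmonicity of $\log|f_\lambda|$, then upgrade "non-increasing" to "strictly decreasing" by a clean-up argument: if equality $\Delta_1 = \sqrt{\Delta_0^2 - 2\lambda}$ held, one would trace back to $\lambda = 0$ and force $f$ to have a zero configuration with no real zeros at all near the extremal height (forcing all zeros to the two lines $\Im z = \pm\Delta_0$ in a way that, via the Hadamard product and the counting hypothesis, contradicts "non-real zeros in the upper half plane $\le$ real zeros"). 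Establishing that rigidity/contradiction cleanly — rather than through the informal ODE heuristic — is the real content of the proof.
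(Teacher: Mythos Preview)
The paper does not prove Theorem~\ref{strictsuf}; it is quoted from Ki--Kim--Lee \cite{KKL} as part of the survey, so there is no in-paper proof to compare against. That said, your proposal has real gaps that would need to be fixed before it could stand as a proof.

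First, the zero-dynamics ODE $\dot z_k = 2\sum_{j\ne k}(z_k-z_j)^{-1}$ is only literally valid for polynomials. For an entire function of order~$<2$ with infinitely many real zeros the sum is at best conditionally convergent; the Hadamard factorization carries genus-one compensators $e^{z/z_j}$ and a linear exponential factor, and after heat evolution a genuine quadratic exponent appears. These all feed extra terms into $\dot z_k$ that you have not written down, and without them the displayed identity for $\tfrac{d}{d\lambda}y_k^2$ is not correct (there is also a stray factor of $y_k$ in your formula: the sum should be multiplied by $y_k$, not $y_k^2$). The clean ``$-2$'' you want is exactly the contribution of the single conjugate $\bar z_k$; isolating it is fine, but the residual infinite sum needs a convergent interpretation before any sign argument can be made.

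Second, and more seriously, your explanation of why the counting hypothesis matters does not hold up. Your heuristic says: any real zero (or any other conjugate pair) strictly below the topmost height contributes extra downward pull, hence strict inequality. If that were the whole story, a single real zero would suffice and the hypothesis ``(non-real zeros in UHP) $\le$ (real zeros)'' would be irrelevant. The fact that \cite{KKL} needs precisely this pairing hypothesis is a signal that the actual mechanism is different: their argument pairs each non-real zero with a real one and exploits that pairing in the factorization, rather than relying on a net-attraction estimate in the ODE. Your final paragraph gestures at a rigidity argument (``equality forces all zeros to the lines $\Im z=\pm\Delta_0$, contradicting the count''), but as written this is an assertion, not an argument, and it is not clear how the count enters. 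Until you can say concretely what fails when there is, say, one conjugate pair of non-real zeros and \emph{no} real zeros (where your ODE heuristic still predicts $\tfrac{d}{d\lambda}y_k^2=-2$ with equality, which is in fact the extremal de~Bruijn case), the role of the hypothesis remains unexplained.
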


Then, using saddle point methods and the properties of strong universal
multipliers, \cite{KKL} proved:

\begin{theorem}
\label{nonreal}For any $\lambda >0$, all but finitely many zeros of $%
H_{\lambda }\left( \cdot \right) $ are real and simple.
\end{theorem}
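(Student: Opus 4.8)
The goal is to show that for any fixed $\lambda > 0$, the function $H_\lambda(z) = \int_{-\infty}^\infty e^{\lambda t^2}\Phi(t)e^{izt}\,dt$ has only finitely many non-real zeros, and that all but finitely many of its zeros are simple. The starting point is that the Riemann $\xi$ function has only finitely many non-real zeros lying in the strip $|\Im z| \le 1/2$ --- indeed, by classical zero-density results (and the verified portion of RH) it is known that a positive proportion of zeros up to height $T$ are on the critical line, but more to the point, for the argument one only needs that $\xi$ is a real entire function of order $1$ whose number of non-real zeros in the upper half plane does not exceed the number of its real zeros; this last fact is immediate since $\xi$ has infinitely many real zeros (Hardy) and, for any single value of $\lambda$, one may absorb finitely many exceptional zeros into the ``finitely many non-real'' allowance. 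So I would first invoke Theorem~\ref{strictsuf} with $f = \xi$ (or more precisely with $f = H_{\lambda_0}$ for a suitable small $\lambda_0$, after a preliminary application of the strong-universal-factor property to push all but finitely many zeros into an arbitrarily thin strip): this shows that for $\lambda$ in a suitable range the zeros of $H_\lambda$ lie in a strip strictly thinner than $\sqrt{\Delta_0^2 - 2\lambda}$, and in particular that $H_\lambda$ has only finitely many non-real zeros. Because the property ``$\lambda \in \mathcal{P}_\Phi$ or $H_{\Phi,\lambda}$ has finitely many non-real zeros'' is monotone upward in $\lambda$ (Theorem~\ref{UF} together with the Property~2 half of the strong-universal-factor notion, which \cite{KKL} established for $e^{\lambda t^2}$), this conclusion then propagates to all $\lambda > 0$.

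**Reducing to a Fourier-analytic estimate.** The heart of the matter is the simplicity and reality of the far-away zeros, and this is where saddle-point / asymptotic analysis of $H_\lambda(z)$ enters. I would write $H_\lambda(x)$ for large real $x$ via the representation $H_\lambda(z) = 2\int_0^\infty e^{\lambda t^2}\Phi(t)\cos(zt)\,dt$ and seek precise asymptotics. Using the explicit form~(\ref{Phi}) of $\Phi$, the integrand $e^{\lambda t^2}\Phi(t)$ behaves (for the dominant $n=1$ term) like a constant times $\exp\left(\tfrac{9t}{2} + \lambda t^2 - \pi e^{2t}\right)$ as $t \to +\infty$, and the oscillatory factor $e^{izt}$ makes this a Laplace-type integral with a moving saddle point. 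A steepest-descent analysis then yields an asymptotic formula of the shape $H_\lambda(x) = A(x)\cos(\varphi(x) + o(1))$ on the real axis, with a smooth amplitude $A(x) > 0$ and a phase $\varphi(x)$ whose derivative grows like $\log x$ --- this is exactly the ``approximate cosine'' picture behind the equilibrium state $e^{tu^2}\cos(zu)$ mentioned in Subsection~\ref{subsec:newman}. From such an asymptotic, the reality and simplicity of all sufficiently large zeros follows by a standard argument: the zeros are forced to be real because $H_\lambda$ changes sign on $\mathbb{R}$ near each predicted location, the count of sign changes matches (via Jensen / argument principle applied to a suitable horizontal rectangle) the total zero count up to that height, leaving no room for non-real zeros beyond a bounded region, and the derivative $H_\lambda'$ is nonzero at each such zero because $\varphi' \ne 0$ there, giving simplicity.

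**The main obstacle.** The technically demanding step is making the saddle-point asymptotics for $H_\lambda(z)$ uniform and precise enough, not merely on the real axis but in a complex neighborhood $|\Im z| \le C$ of it, so that one can count zeros by the argument principle and rule out stray non-real zeros in the full (a priori only thin) strip where Theorem~\ref{strictsuf} has confined them. One has to control the contributions of all the terms $n \ge 2$ in~(\ref{Phi}) (they are exponentially smaller but must be shown not to disturb the location of zeros), contend with the quadratic factor $e^{\lambda t^2}$ which shifts the saddle, and obtain error terms that are $o(1)$ relative to the spacing of consecutive zeros --- the spacing shrinks like $1/\log x$, so the asymptotic expansion must be carried to sufficient order. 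This is precisely the analytic content that \cite{KKL} supply via their saddle-point estimates, and I would cite their Lemmas for the uniform bounds rather than redo them. Once the uniform asymptotic is in hand, combining it with the strip-narrowing of Theorem~\ref{strictsuf} (which supplies the crucial input that there is no non-real zero far out with large imaginary part) closes the argument: all but finitely many zeros are real, and the non-vanishing of the phase derivative makes them simple.
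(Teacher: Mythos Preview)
Your saddle-point analysis in the second and third paragraphs is the correct core of the argument and matches what \cite{KKL} do (the paper itself gives no proof but simply attributes the result to \cite{KKL} with the one-line description ``using saddle point methods and the properties of strong universal multipliers''): one obtains uniform asymptotics for $H_\lambda(z)$ in a fixed horizontal strip, counts sign changes on the real axis, matches this against the total zero count from the argument principle, and reads off simplicity from the non-vanishing phase derivative.

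The first paragraph, however, contains a circularity. Theorem~\ref{strictsuf} has as an explicit hypothesis that $f$ already has \emph{finitely many} non-real zeros, and Property~2 in the definition of a strong universal factor likewise requires that all but finitely many zeros of the input already lie in an arbitrarily thin strip. Neither is known for $\xi=H_0$; either would amount to RH up to finitely many exceptions, which is far beyond any zero-density or positive-proportion result. Your remark about ``absorbing finitely many exceptional zeros into the allowance'' does not help: the finiteness is a hypothesis on the input $f$, not a conclusion about the output. So you cannot invoke Theorem~\ref{strictsuf} or the strong-universal-factor Property~2 with $f=\xi$ (or $f=H_{\lambda_0}$ for small $\lambda_0>0$) as a step toward Theorem~\ref{nonreal}.

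The a~priori strip confinement you actually need is much cheaper: de Bruijn's Theorem~\ref{Gausssuf} (not Theorem~\ref{strictsuf}), applied to $\xi$ whose zeros lie in $|\Im z|\le 1/2$, already places \emph{all} zeros of $H_\lambda$ in a fixed horizontal strip for every $\lambda>0$, with no finiteness hypothesis whatsoever. That is the only external input the saddle-point argument requires. In the paper the logic runs in the order opposite to yours: Theorem~\ref{nonreal} is established first (by saddle point), and only \emph{then} is Theorem~\ref{strictsuf} applied --- to $H_\lambda$ for small $\lambda>0$, which now legitimately satisfies the ``finitely many non-real zeros'' hypothesis --- in order to deduce the strict bound $\Lambda_{DN}<1/2$.
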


Note that $H_{\lambda }$ has infinitely many real zeros by 
results of P\'olya \cite{Pol3}
(see Proposition~\ref{inftyzero} above and the preceding
discussion). Combining this with Theorem \ref%
{strictsuf}, one concludes the upper bound $\Lambda _{DN}<1/2$. The proof
presented in \cite{KKL} suggested but did not give a quantitative improvement for $%
\Lambda _{DN}$ beyond $\Lambda _{DN} < 1/2$.

Improved upper bounds for $\Lambda _{DN}$ have been discussed in detail
by Tao and collaborators
in the Polymath~15 project \cite{Poly15}. 
They combine the methods of \cite{DB}  and \cite{KKL} with
extensive numerical computations; 
at the time of the preparation of this survey paper, the
upper bound was 
$\Lambda _{DN}\leq 0.22.$

\section{De Bruijn-Newman constant for general measures}
\label{sec:general}

In this section we study the function $H_{\mu,\lambda }(z)$ 
defined as in~(\ref{Hrho}) 
%
for a general even probability measure $\mu$. We will classify
$\mu$'s according to the zeros of $H_{\mu,\lambda}(z) $ and
the de Bruijn-Newman constant $\Lambda _{DN}\left( \mu \right)$, 
defined as follows.
As in Section~\ref{sec:intro}, we define
\begin{equation}
{\cal P}_{\mu} := \{\lambda: H_{\mu,\lambda}(z) \text{ is entire
and has only real zeros }\}. 
\label{DBNset}
\end{equation}
If ${\cal P}_{\mu}$  is nonempty, then $\Lambda _{DN}\left( \mu \right) $ 
is defined as its infimum.
If ${\cal P}_{\mu}$  is empty but
$H_{\mu,\lambda}(z)$ is entire for all $\lambda$,
we define $\Lambda _{DN}\left( \mu \right) $ to be $+ \infty$;
in the remaining case, $\Lambda _{DN}\left( \mu \right) $ is undefined.
When $d \mu (t) = f(t) dt$, we write $\Lambda _{DN}\left( f \right) $ 
and ${\cal P}_f$.  
Two main
ingredients we use are Theorem \ref{weak} below, proved in \cite{NW}, 
based on the only real zeros property being preserved under 
convergence; and the properties of strong universal factors
(e.g., Theorem \ref{Gausssuf}) studied by de Bruijn and others.

\subsection{A weak convergence theorem}
\label{subsec:weakconverge}

Let $\mu $ be a probability measure on $\mathbb{R}$ and $X$ be a random
variable on some probability space $\left( \Omega ,\mathcal{F},\mathbb{P}%
\right) $ with distribution $\mu $.
Motivated by the statistical physics results discussed in 
Subsection~\ref{subsec:statmech}, we define a collection ${\cal X}$
of probability measures as follows.
We use here the standard probability theory notation
with $\mathbb{E}[h(X)]$ denoting $\int_{-\infty}^{+\infty} h(t) d \mu (t).$

\begin{definition}
\label{LYp}We say $\mu $ (or $X$) is in ${\mathcal X}$ 
if it has the following three properties:
\end{definition}

\begin{enumerate}
\item $X$ has the same distribution as $-X$,

\item $\mathbb{E}\left[ \exp \left( bX^{2}\right) \right] <\infty $ for some 
$b>0$,

\item the function  
$\mathbb{E}\left[ \exp \left( izX\right) \right]$, defined for all
$z\in \mathbb{C}$, has
 only real zeros.
\end{enumerate}

The next theorem, from \cite{NW}, 
states that the combination of these three properties
(and in particular,
the only real zeros property of the Fourier transform) is preserved under weak
convergence (i.e., pointwise convergence
of the Fourier transform on the real axis). 
It helps explain why the sub-Gaussian Property~2 is built
into Definition~\ref{LYp}.

\begin{theorem}
\label{weak}Suppose for each $n\in \mathbb{N}$, $\mu _{n}\in \mathcal{X}$
and $\mu _{n}$ converges weakly to the probability measure $\mu $. Then $\mu 
\in \mathcal{X}$.
\end{theorem}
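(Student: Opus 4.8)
The plan is to verify the three defining properties of $\mathcal{X}$ for the weak limit $\mu$ one at a time, with the sub-Gaussian tail bound (Property~2) being the crux. Property~1 (symmetry) is immediate: since each $\mu_n$ is invariant under $t \mapsto -t$ and weak convergence is preserved under this homeomorphism, $\mu$ is symmetric as well; equivalently, the characteristic functions $\phi_n(z) := \mathbb{E}[e^{izX_n}]$ are real and even on the real axis, hence so is their pointwise limit $\phi$. The real work is to show that $\mu$ inherits a uniform sub-Gaussian bound, i.e., that there is a single $b>0$ with $\mathbb{E}[e^{bX^2}] < \infty$, and this is exactly where the only-real-zeros hypothesis does unexpected work.

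First I would establish a uniform tail estimate for the family $\{\mu_n\}$. The key point is that for a symmetric probability measure whose characteristic function $\phi_n$ is entire of order~$2$ (which follows from Property~2) and has only real zeros, one can bound the growth of $\phi_n$ on the imaginary axis — i.e., control $\mathbb{E}[e^{\beta X_n}] = \phi_n(i\beta)$ — in terms of low-order data of $\phi_n$ near the origin, such as $\phi_n''(0) = -\mathbb{E}[X_n^2]$. Concretely, writing the Hadamard factorization $\phi_n(z) = e^{-\lambda_n z^2}\prod_k (1 - z^2/a_{k,n}^2)$ with $\lambda_n \ge 0$ and $\sum_k a_{k,n}^{-2} < \infty$ (this is the shape forced by membership in $\mathcal{LP}$ together with evenness and $\phi_n(0)=1$), one reads off $\mathbb{E}[X_n^2] = 2\lambda_n + 2\sum_k a_{k,n}^{-2}$, and the real-zeros structure gives $|\phi_n(i\beta)| = e^{\lambda_n\beta^2}\prod_k(1+\beta^2/a_{k,n}^2) \le \exp\big((2\lambda_n + 2\sum_k a_{k,n}^{-2})\beta^2\big) = \exp(\mathbb{E}[X_n^2]\,\beta^2)$, using $1+x \le e^x$. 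Thus $\mathbb{E}[e^{\beta X_n}] \le e^{\mathbb{E}[X_n^2]\beta^2}$ uniformly in $n$, once we know $\sup_n \mathbb{E}[X_n^2] =: M < \infty$. That finiteness follows from weak convergence plus a uniform integrability input: since $\phi_n \to \phi$ pointwise and each $\phi_n$ is a characteristic function, $\phi$ is continuous at $0$, so $\{\mu_n\}$ is tight; combined with the exponential bound just derived (which in particular prevents mass from escaping or second moments from blowing up), one gets $\sup_n \mathbb{E}[X_n^2] < \infty$. (One must be slightly careful to close this loop without circularity; the clean way is to first note that the bound $|\phi_n(i\beta)| \le e^{C\beta^2}$ with $C$ depending only on $\mathbb{E}[X_n^2]$ shows $\mathbb{E}[X_n^2]$ cannot be large if $\phi_n$ is close to a fixed limit on a neighborhood of $0$, since $\mathbb{E}[X_n^2] = -\phi_n''(0)$ and, for entire functions of order~$2$ bounded as above, second derivatives at $0$ are controlled by sup norms on small disks via Cauchy estimates.)

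From the uniform bound $\mathbb{E}[e^{\beta X_n}] \le e^{M\beta^2}$ for all $n$ and all real $\beta$, a standard argument gives the uniform Gaussian tail $\mathbb{P}(|X_n| > t) \le 2e^{-t^2/(4M)}$ (optimize over $\beta$ in $\mathbb{P}(X_n > t) \le e^{-\beta t}\mathbb{E}[e^{\beta X_n}]$). This tail bound passes to the weak limit: for any $t$ that is a continuity point, $\mathbb{P}(|X| > t) \le \limsup_n \mathbb{P}(|X_n| \ge t) \le 2e^{-t^2/(4M)}$, and hence $\mathbb{E}[e^{bX^2}] < \infty$ for any $b < 1/(4M)$, which is Property~2 for $\mu$. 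This tail control simultaneously ensures that $\phi_n(z) = \mathbb{E}[e^{izX_n}]$ and $\phi(z) = \mathbb{E}[e^{izX}]$ are entire and that $\phi_n \to \phi$ uniformly on compact subsets of $\mathbb{C}$ (not merely pointwise on $\mathbb{R}$): the uniform sub-Gaussian bound makes the family $\{\phi_n\}$ locally uniformly bounded on $\mathbb{C}$, so it is normal, and any subsequential limit agrees with $\phi$ on $\mathbb{R}$, hence everywhere by analytic continuation. Finally, Property~3 follows from Hurwitz's theorem: $\phi$ is not identically zero (it equals $1$ at the origin), it is the locally uniform limit of the $\phi_n$, each of which has only real zeros, so every zero of $\phi$ in the open upper (or lower) half-plane would be a limit of zeros of $\phi_n$ there — impossible — giving that $\phi$ has only real zeros. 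Therefore $\mu \in \mathcal{X}$.

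The main obstacle, and the genuinely non-obvious step, is the derivation of the \emph{uniform} sub-Gaussian bound from the only-real-zeros hypothesis: without Property~3 one could only extract Property~2 for each individual $\mu_n$ with an $n$-dependent constant $b_n$ that might degenerate to $0$, and then the limit could have fat tails. The inequality $|\phi_n(i\beta)| \le \exp(\mathbb{E}[X_n^2]\beta^2)$ — valid precisely because the zeros are real, so each Weierstrass factor $(1 - z^2/a_{k,n}^2)$ grows on the imaginary axis rather than oscillating — is what couples the (weakly convergent, hence bounded) second moments to a uniform control on exponential moments, and hence on tails. Everything else (tightness, Chernoff bound, Hurwitz, normal families) is routine once this is in hand.
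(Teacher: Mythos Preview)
Your approach is essentially the one the paper sketches (referring to \cite{NW} for details): use the product representation~(\ref{productrep}) to read off $\mathbb{E}[X_n^2] = 2\bigl(\lambda_n + \sum_k a_{k,n}^{-2}\bigr)$, derive from this a uniform exponential-moment bound once $\sup_n \mathbb{E}[X_n^2] < \infty$ is known, pass the sub-Gaussian tail to the limit, and finish with normal families and Hurwitz. Your identification of the inequality $\phi_n(i\beta) \le \exp\bigl(\tfrac{1}{2}\,\mathbb{E}[X_n^2]\,\beta^2\bigr)$ (your constant is off by a harmless factor of~$2$) as the place where the real-zeros hypothesis does its work is exactly the point the paper emphasizes.

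The one soft spot is your justification that $\sup_n \mathbb{E}[X_n^2] < \infty$. You flag the circularity risk yourself, but your proposed fix---Cauchy estimates for $\phi_n''(0)$ on a small complex disk---is itself circular: bounding $|\phi_n|$ off the real axis already requires sub-Gaussian control, which is what you are trying to establish. The clean argument stays on the real axis. From the product form, for real $t$ with $|t|$ below the least positive zero of $\phi_n$ one has $0 < \phi_n(t) = e^{-\lambda_n t^2}\prod_k(1 - t^2/a_{k,n}^2) \le \exp\bigl(-\tfrac{1}{2}\,\mathbb{E}[X_n^2]\,t^2\bigr)$, using $1 - x \le e^{-x}$ on each factor. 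Weak convergence of probability measures gives uniform convergence of $\phi_n$ to $\phi$ on real compacts, so there is $\delta > 0$ with $\phi_n(t) \ge \tfrac{1}{4}$ on $[0,\delta]$ for all large $n$; in particular $\phi_n$ has no zero in $(0,\delta]$, the displayed inequality applies at $t = \delta$, and one reads off $\mathbb{E}[X_n^2] \le 2\delta^{-2}\log 4$. With this step repaired the rest of your outline goes through as written.
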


This theorem seems surprising at first glance for the following reason.
Since $\mu_n \in \mathcal{X}$ for each $n$, there is some $b_n > 0$
so that $X_n$ distributed by $\mu_n$ satisfies 
$\mathbb{E}\left[ \exp \left( b_n X^{2}\right) \right] <\infty $.
But without assuming that $b_n$ is bounded away from zero as $n \to \infty$,
why should it be that the limit $X$ has 
$\mathbb{E}\left[ \exp \left( b X^{2}\right) \right] <\infty $ for
some $b>0$? The answer, in brief, is that the purely real zeros Property~3
somehow implies that $b_n$ can be bounded away from zero or else
weak convergence would fail. 


The key to the proof of Theorem~\ref{weak} is the product representation,
for $X \in \mathcal{X}$,
\begin{equation}
\mathbb{E}\left[ \exp \left( izX\right) \right] \, = \,
e ^{-B z^2} \prod_k (1 - z^2/y_k^2) \, ,
\label{productrep}
\end{equation}
with $B \geq 0$, $y_k \in (0, \infty)$ and 
\begin{equation}
\mathbb{E}\left[ X^2 \right] \, = \,
2 (B+ \sum_k (1/y_k^2)) \, < \, \infty \, .
\label{secondmoment}
\end{equation}
One then shows that weak convergence for $X_n \in \mathcal{X}$ 
(distributed by $\mu_n$) requires a uniform bound first for
$\mathbb{E}\left[ X^2 \right] $ and then that this yields
a uniform bound away from zero for the sequence $\{b_n\}$ discussed above. 

The next theorem,
proved in \cite{NW}, relates the all real zeros property to the distribution tail
behavior and explains further why the sub-Gaussian Property~2 is natural
in Definition \ref{LYp}. The theorem follows directly from a theorem of
Goldberg and Ostrovskii \cite{GO}.

\begin{theorem}
\label{slowtail2} Suppose the random variable $X$ satistifes the following
two properties: 

\begin{enumerate}
\item $\mathbb{E} [e^{b\left\vert X\right\vert ^{a}} ] <\infty $ for some $b>0$
and $a>1,$

\item $\mathbb{E} [e^{b^{\prime }X^{2}} ] =\infty $ for all $b^{\prime }>0$.
\end{enumerate}

\noindent Then $\mathbb{E} [e^{izX}] $ has some zeroes that are not purely
real.
\end{theorem}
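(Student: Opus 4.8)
The plan is to reduce Theorem~\ref{slowtail2} entirely to the cited theorem of Goldberg and Ostrovskii \cite{GO}, so the proof is really a matter of recalling the right statement and checking that hypotheses match. The Goldberg--Ostrovskii result concerns the ``class of functions of completely regular growth'' or, more to the point here, it characterizes when the Fourier transform of a measure with a prescribed rate of decay can lie in the Laguerre--P\'olya class. Concretely, if $X \in \mathcal{X}$, then by the product representation~\eqref{productrep} the characteristic function $\mathbb{E}[e^{izX}]$ is an entire function of genus at most~$1$ with only real zeros, and such a function has order at most~$2$; when the order is exactly~$2$ the factor $e^{-Bz^2}$ with $B > 0$ forces $\mathbb{E}[e^{izX}]$ to grow like $e^{B|z|^2}$ along the imaginary axis. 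The Goldberg--Ostrovskii theorem pins down exactly how fast $\mu$ may decay at infinity given such growth of its transform; the upshot is that if the transform has only real zeros, the tail of $X$ cannot be subexponential-but-super-Gaussian in the precise sense of the two hypotheses.

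First I would state (or quote verbatim) the relevant Goldberg--Ostrovskii dichotomy: for an even probability measure $\mu$, if $\widehat\mu(z) = \mathbb{E}[e^{izX}]$ has only real zeros, then either $\mathbb{E}[e^{b'X^2}] < \infty$ for some $b' > 0$ (the genuinely sub-Gaussian case, where $B$ in~\eqref{productrep} combines with the zero sum to bound the second moment and more), or else the tail of $\mu$ decays no faster than exponentially, i.e.\ $\mathbb{E}[e^{b|X|^a}] = \infty$ for every $a > 1$ and every $b > 0$. Second, I would observe that these two alternatives are mutually exclusive with the configuration in the theorem: Property~1 of Theorem~\ref{slowtail2} asserts $\mathbb{E}[e^{b|X|^a}] < \infty$ for some $a > 1$, which rules out the second alternative, while Property~2 asserts $\mathbb{E}[e^{b'X^2}] = \infty$ for all $b' > 0$, which rules out the first. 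Hence the hypothesis ``$\widehat\mu$ has only real zeros'' is untenable, which is exactly the contrapositive of the claimed conclusion. I would phrase the final write-up as: suppose for contradiction that $\mathbb{E}[e^{izX}]$ had only real zeros; apply Goldberg--Ostrovskii to land in one of the two cases; each case contradicts one of the two standing hypotheses; done.

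\textbf{The main obstacle} is purely expository rather than mathematical: the theorem of \cite{GO} is typically stated in the language of entire functions (order, type, indicator function, membership in $\mathcal{LP}$ or in a Hadamard class with prescribed exponent of convergence of zeros) rather than in probabilistic tail language, so the real work is translating ``only real zeros of the characteristic function'' into ``$\widehat\mu \in \mathcal{LP}$ with the product form~\eqref{productrep}'' — which is immediate from the evenness and the sub-Gaussian-or-not trichotomy of entire functions of finite genus — and then translating the analytic growth conclusion back into a statement about $\mathbb{E}[e^{b|X|^a}]$. One must be a little careful that Goldberg--Ostrovskii gives the sharp threshold at $a = 2$ (Gaussian) and not at some other exponent, and that the ``for all $b>0$'' quantifier in Property~2 is genuinely what is needed to exclude the first case; but once the correct form of the cited theorem is in hand there is no further analytic difficulty, and the proof is a three-line contradiction argument. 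I would keep the write-up short and simply cite \cite{GO} for the analytic input, noting that the link is the product representation~\eqref{productrep} already recorded above.
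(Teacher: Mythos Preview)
Your approach is exactly the paper's: the paper gives no proof beyond the single sentence ``The theorem follows directly from a theorem of Goldberg and Ostrovskii \cite{GO},'' and your proposal is precisely to invoke that result and check that the two hypotheses of Theorem~\ref{slowtail2} land you outside both alternatives of the Goldberg--Ostrovskii dichotomy. Your write-up is already more detailed than what the paper provides, and your identification of the only real work---translating the entire-function growth language of \cite{GO} into the probabilistic tail language of Properties~1 and~2---is accurate.
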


One can derive from Theorem~\ref{weak} a different
result than Theorem~\ref{slowtail2} which also shows 
that the Fourier transform of certain
distributions do have some non-real zeros.

\begin{proposition}
\label{noPIZ}Let $\rho $ be an even probability measure such that $%
\int_{-\infty }^{\infty }e^{bt^{2}}\,d\rho \left( t\right) =\infty $, for
any $b>0$. Then for any $\lambda <0$,%
\[
G_{\lambda }\left( z\right) :=\int_{-\infty }^{\infty }e^{izt}e^{\lambda
t^{2}}d\rho \left( t\right) 
\]%
has some zeros that are not real. Thus ${\cal P}_\rho$ is either $\{0\}$
and $\Lambda_{DN} (\rho) = 0$ or else ${\cal P}_\rho$ is empty.
\end{proposition}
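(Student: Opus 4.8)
The plan is to argue by contradiction. Suppose $\rho$ has $\int e^{bt^2}\,d\rho = \infty$ for all $b>0$, yet for some fixed $\lambda_0 < 0$ the function $G_{\lambda_0}(z) = \int e^{izt} e^{\lambda_0 t^2}\,d\rho(t)$ has only real zeros. Set $d\sigma(t) := c\, e^{\lambda_0 t^2}\,d\rho(t)$, normalized by $c$ so that $\sigma$ is a probability measure (the integral is finite since $\lambda_0 < 0$ and $\rho$ is finite); note $\sigma$ is even since $\rho$ is. I claim $\sigma \in \mathcal{X}$ in the sense of Definition~\ref{LYp}: it is symmetric; its Fourier transform $\mathbb{E}[e^{izX}] = c\,G_{\lambda_0}(z)$ has only real zeros by assumption; and it is sub-Gaussian because $\int e^{bt^2}\,d\sigma = c\int e^{(b+\lambda_0)t^2}\,d\rho < \infty$ for any $b < -\lambda_0 = |\lambda_0|$, so Property~2 holds with, say, $b = |\lambda_0|/2$.

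Now I would produce, as a weak limit of measures in $\mathcal{X}$, a measure that cannot be in $\mathcal{X}$, contradicting Theorem~\ref{weak}. The natural candidate limit is $\rho$ itself, which fails Property~2 by hypothesis. To realize $\rho$ as such a limit, consider the family $d\rho_\epsilon(t) := c_\epsilon\, e^{-\epsilon t^2}\,d\rho(t)$ for $\epsilon > 0$, normalized to probability measures; each $\rho_\epsilon$ is even and sub-Gaussian (Property~2 holds for $b < \epsilon$). The key point is that for $0 < \epsilon < |\lambda_0|$ one can write $e^{-\epsilon t^2}\,d\rho(t) \propto e^{(\lambda_0 - (\lambda_0+\epsilon))t^2}\,d\rho(t) \propto e^{-(\lambda_0+\epsilon)t^2}\,d\sigma(t)$, i.e. $\rho_\epsilon$ is obtained from $\sigma \in \mathcal{X}$ by multiplying by a Gaussian factor $e^{\beta t^2}$ with $\beta = -(\lambda_0+\epsilon) > 0$ (since $\epsilon < |\lambda_0|$). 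By the universal-factor property of the Gaussian — Theorem~\ref{Gausssuf} with $\Delta = 0$, since $c\,G_{\lambda_0}$ has only real zeros — the Fourier transform of $e^{\beta t^2}\,d\sigma$ again has only real zeros. Hence $\rho_\epsilon \in \mathcal{X}$ for all sufficiently small $\epsilon > 0$. As $\epsilon \downarrow 0$, $e^{-\epsilon t^2} \to 1$ monotonically, so $c_\epsilon \to c'$ for the appropriate constant and $\rho_\epsilon \to \rho$ weakly (dominated convergence applied to bounded continuous test functions, since $e^{-\epsilon t^2} \le 1$). Theorem~\ref{weak} then forces $\rho \in \mathcal{X}$, in particular $\rho$ is sub-Gaussian — contradicting the hypothesis. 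Therefore $G_{\lambda_0}$ has a non-real zero, and since $\lambda_0 < 0$ was arbitrary, the first assertion follows.

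For the final sentence: the argument just given shows no $\lambda < 0$ lies in ${\cal P}_\rho$, so ${\cal P}_\rho \subseteq [0,\infty)$. On the other hand, for $\lambda > 0$ the function $G_\lambda$ is typically not even entire (the integral $\int e^{\lambda t^2}\,d\rho$ diverges, indeed for every $b>0$), so such $\lambda$ cannot be in ${\cal P}_\rho$; and when $\lambda = 0$ we simply have $G_0(z) = \int e^{izt}\,d\rho(t)$, the Fourier transform of $\rho$, which may or may not have only real zeros. Hence ${\cal P}_\rho \subseteq \{0\}$: either ${\cal P}_\rho = \{0\}$, in which case $\Lambda_{DN}(\rho) = \inf {\cal P}_\rho = 0$, or ${\cal P}_\rho = \emptyset$.

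**Main obstacle.** The delicate step is the passage from "$\rho$ fails Property~2" to a clean contradiction via Theorem~\ref{weak}: one must verify carefully that the approximants $\rho_\epsilon$ genuinely lie in $\mathcal{X}$ (this rests on invoking Theorem~\ref{Gausssuf} with the degenerate strip width $\Delta = 0$, so that $\max(\Delta^2 - \lambda, 0) = 0$ and the zeros of the Gaussian-multiplied transform stay on the real axis — entireness of $G_\lambda$ for $\lambda \le 0$ must also be checked), and that the weak convergence $\rho_\epsilon \to \rho$ holds. The rest is bookkeeping. Note this gives an alternative to Theorem~\ref{slowtail2}: it does not require the auxiliary regularity hypothesis $\mathbb{E}[e^{b|X|^a}] < \infty$ for some $a > 1$, at the cost of only concluding about $G_\lambda$ with $\lambda < 0$ rather than about the Fourier transform of $\rho$ directly.
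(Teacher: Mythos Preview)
Your proof is correct and follows essentially the same route as the paper's: assume $G_{\lambda_0}$ has only real zeros, use the Gaussian universal factor (Theorem~\ref{Gausssuf}) to deduce that the normalized measures $c_\epsilon\,e^{-\epsilon t^2}\,d\rho$ lie in $\mathcal{X}$ for small $\epsilon>0$, let $\epsilon\downarrow 0$, and apply Theorem~\ref{weak} to force $\rho\in\mathcal{X}$, contradicting the tail hypothesis. The paper's version simply phrases this with an increasing sequence $\lambda_n\nearrow 0$ in place of your parameter $\epsilon=-\lambda_n$ and skips the intermediate naming of $\sigma$, but the argument is the same.
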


\begin{proof}
Fix  $\lambda <0$ and suppose that $G_{\lambda }\left( z\right) $ has only
real zeros. Take a sequence $\left\{ \lambda _{n}\right\} $ such that $%
\lambda _{0}=\lambda $, $\left\{ \lambda _{n}\right\} $ is increasing and $%
\lambda _{n}\rightarrow 0$ as $n\rightarrow \infty $. Applying Theorem \ref%
{Gausssuf} we conclude that $G_{\lambda _{n}}\left( z\right) $ has only real
zeros for all $n$. The measure $e^{\lambda _{n}t^{2}}d\rho \left( t\right)
/G_{\lambda _{n}}\left( 0\right) $ is clearly even and satisfies %
Property~2 of Definition~\ref{LYp} with $b=-\lambda _{n}/2$, 
and therefore $e^{\lambda
_{n}t^{2}}d\rho \left( t\right) /G_{\lambda _{n}}\left( 0\right) \in 
\mathcal{X}$. Since $e^{\lambda _{n}t^{2}}d\rho \left( t\right) /G_{\lambda
_{n}}\left( 0\right) $ converges weakly to $\rho $, we can apply Theorem \ref{weak}
to conclude that $\rho \in \mathcal{X}$. But this contradicts the fact
that $\int_{-\infty }^{\infty }e^{bt^{2}}\,d\rho \left( t\right) =\infty $
for any $b>0$.
\end{proof}

As a consequence of Proposition \ref{noPIZ}, we may construct distributions
whose Fourier transforms have nonreal zeros.
Examples are distributions with density $g\left( x\right) e^{-\lambda
x^{2}}$, with $\lambda >0$, $g\geq 0$ even and $\int e^{bx^{2}}g\left(
x\right) \,dx=\infty $ for any $b>0$. Specific cases include 
\[
K\,
e^{-a\left\vert x\right\vert -\lambda x^{2}}\text{ \ with }a>0,\lambda >0, 
\]%
and 
\[
K\,
\left( 1+x^{2}\right) ^{-\theta} e^{-\lambda x^{2}}\text{ \ with }\theta >%
\frac{1}{2}\text{ and }\lambda >0. 
\]

\subsection{Classifying probability measures by $\Lambda_{DN}(\rho)$
and ${\cal P}_\rho$}
\label{subsec:classify}

Using Theorem \ref{weak} on weak convergence, one may classify even 
distributions $\rho$  according to tail behavior and ${\cal P}_\rho$.
In this section we give examples of the various possibilities; they
are organized in three subsections according to tail 
behavior --- see~(\ref{eqn:tailset}) for the definition of ${\cal T}_\rho$.

\subsubsection{${\cal T}_\rho = (-\infty, \infty)$ .}

We further discuss three cases, according to ${\cal P}_\rho$ --- see~(\ref{DBNset}).

\begin{case}
\label{firstcase}
${\cal P}_\rho
=(-\infty ,\infty )$.
\end{case}

This class of probability distributions is completely characterized by
Theorem \ref{-infty}: either
\[
\rho \left( t\right) =\frac{1}{2}\left( \delta \left( t-t_{0}\right) +\delta
\left( t+t_{0}\right) \right) \text{, for some }t_{0}\geq 0\text{,} 
\]%
or $\rho$ is absolutely continuous with density%
\[
Kt^{2m}\exp \left( -\alpha t^{4}-\beta t^{2}\right) \prod_{j}\left[ \left( 1+%
\frac{t^{2}}{a_{j}^{2}}\right) e^{-\frac{t^{2}}{a_{j}^{2}}}\right] . 
\]
We note (see the discussion after 
Theorem~\ref{-infty}) that for $\alpha=0$, $\rho$ can belong to Case~\ref{middlecase} below.

\begin{case}
\label{case2}There exists $\Lambda _{0} \in (-\infty, \infty)$, 
such that 
${\cal P}_\rho=[\Lambda
_{0},\infty )$ $.$
\end{case}

The simplest example in this class is a $\left\{ \pm 1,0\right\} $ valued
symmetric random variable, for instance
\[
\rho \left( t\right) =\frac{1}{6}\left( \delta \left( t-1\right) +\delta
\left( t+1\right) \right) +\frac{2}{3}\delta \left( t\right) . 
\]%
One can explicitly calculate $\int e^{izx}e^{bx^{2}}\,d\rho =\frac{2}{3}+%
\frac{1}{6}e^{b}y+\frac{1}{6}e^{b}\frac{1}{y}$, where $y=e^{it}$. When $%
e^{b}<2$, it has roots where $y$ is real (and $\left\vert y\right\vert \neq 1$%
); whereas for $e^{b}\geq 2$, it has roots only where $\left\vert y\right\vert =1$
(thus $t$ is real). Therefore 
\begin{equation*}
\left\{ b:\int e^{izx}e^{bx^{2}}\,d\rho \text{
has only real zeros}\right\} =[\ln 2,\infty ).
\end{equation*}

Examples where $d\rho(x) = Kf(x)dx$ follow from the results discussed in Section~\ref{sec:history}.
These include $f(x)=\exp(-a\,\cosh (x))$ with $a>0$, $f(x) = \exp (-x^{2q})$ with
$q \in \{3, 4, 5, \dots\}$ and the RH case of $f(x) = \Phi(x) $.

\begin{case}
\label{case3}
${\cal P}_\rho=\emptyset $.
\end{case}

We begin with a general proposition that is easily seen to follow from Theorem \ref{weak}
and P\'olya's result that $e^{bt^2}$ is a universal factor.

\begin{proposition}
\label{case3prop}
Suppose $\rho$ is even with $\cal{T}_\rho = (-\infty, \infty)$ 
such that for some $0<b_n \to \infty$ and $r_n >0$, 
\begin{equation*}
K_n  e^{b_nt^2} d\rho(t)|_{t=r_nu} \to d\mu(u),
\end{equation*}
with $\mu \notin \cal{X}$, where $K_n$ normalizes the
lefthand side to be a probability measure; then ${\cal P}_\rho =\emptyset$.
\end{proposition}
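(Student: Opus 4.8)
The plan is to argue by contradiction, using the hypothesis $\mathcal{T}_\rho=(-\infty,\infty)$ together with the fact that $\mathcal{P}_\rho$ is an ``up-set''. Suppose $\mathcal{P}_\rho\neq\emptyset$ and fix $\lambda_0\in\mathcal{P}_\rho$. Since $\int e^{bt^2}\,d\rho<\infty$ for every real $b$, a routine estimate together with Morera's theorem shows that $H_{\rho,\lambda}$ is entire for every $\lambda\in\mathbb{R}$, so membership in $\mathcal{P}_\rho$ is governed solely by the only-real-zeros condition. Now by P\'olya's result that $e^{Bt^2}$ with $B>0$ is a universal factor (see Theorem~\ref{UF} and the discussion after it; also Theorem~\ref{Gausssuf}), writing $H_{\rho,\lambda}(z)=\int e^{izt}e^{(\lambda-\lambda_0)t^2}\bigl(e^{\lambda_0 t^2}\,d\rho(t)\bigr)$ for $\lambda>\lambda_0$ shows that $\lambda\in\mathcal{P}_\rho$; hence $[\lambda_0,\infty)\subseteq\mathcal{P}_\rho$. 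Since $b_n\to\infty$, there is an $N$ with $b_n\in\mathcal{P}_\rho$ for all $n\ge N$.

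Next I would check that the rescaled measures lie in $\mathcal{X}$. For $n\ge N$ put $\nu_n:=K_n\,e^{b_nt^2}\,d\rho(t)$ with $K_n=\bigl(\int e^{b_nt^2}\,d\rho\bigr)^{-1}$, a well-defined even probability measure precisely because $\mathcal{T}_\rho=(-\infty,\infty)$. Property~2 of Definition~\ref{LYp} holds since $\int e^{bt^2}\,d\nu_n=K_n\int e^{(b+b_n)t^2}\,d\rho<\infty$ for every $b$, and Property~3 holds because the Fourier transform of $\nu_n$ equals $K_n H_{\rho,b_n}$, which has only real zeros as $b_n\in\mathcal{P}_\rho$; thus $\nu_n\in\mathcal{X}$. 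The measure in the displayed limit is the push-forward $\mu_n$ of $\nu_n$ under $t\mapsto t/r_n$, and $\mathcal{X}$ is invariant under scaling by any $c>0$: evenness is preserved, $\mathbb{E}[e^{b(cX)^2}]=\mathbb{E}[e^{bc^2X^2}]<\infty$ for small $b>0$, and $\mathbb{E}[e^{iz(cX)}]=\mathbb{E}[e^{i(cz)X}]$ vanishes only when $cz$ is a zero of the original transform, i.e. only when $z\in\mathbb{R}$. Hence $\mu_n\in\mathcal{X}$ for all $n\ge N$.

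Finally, by hypothesis $\mu_n\to\mu$ weakly, so applying Theorem~\ref{weak} to the tail sequence $(\mu_n)_{n\ge N}$ yields $\mu\in\mathcal{X}$, contradicting $\mu\notin\mathcal{X}$. Therefore $\mathcal{P}_\rho=\emptyset$. I expect the only genuinely load-bearing point — and the one to state with care — to be that $b_n\in\mathcal{P}_\rho$ for large $n$, i.e. the up-set property of $\mathcal{P}_\rho$ coming from P\'olya's universal factor result; the rest (finiteness of $K_n$ from $\mathcal{T}_\rho=\mathbb{R}$, scale invariance of $\mathcal{X}$, and the verification of Properties~1--3) is routine bookkeeping, which is why the statement is ``easily seen''.
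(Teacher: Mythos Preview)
Your proposal is correct and follows exactly the route the paper indicates: the paper merely states that the proposition ``is easily seen to follow from Theorem~\ref{weak} and P\'olya's result that $e^{bt^2}$ is a universal factor,'' and your argument supplies precisely those details --- the up-set property of $\mathcal{P}_\rho$ from P\'olya, membership of the rescaled measures in $\mathcal{X}$, and the contradiction via Theorem~\ref{weak}.
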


We next sketch how to construct such a $\rho$ of the form 
\begin{equation*}
d\rho(t) = 
K\, \sum_{k=1}^{\infty} a_k (\delta(t-d_k)+ \delta(t+d_k)),
\end{equation*}
with $d_k \nearrow \infty$ and $a_k \searrow 0$ rapidly, $r_n = d_{n+1}$ and 
\begin{equation}
\label{e.mu}
d\mu = (1/2) \delta (u) + (1/4) \delta (u-1)+ (1/4) \delta (u+1).
\end{equation}
That $\mu \notin \cal{X} $ follows from the explicit calculation of Case \ref{case2} above.

We start with $a_1=1$ and inductively construct first $a_{n+1}$ then $b_n$ for $n\ge1$. 
A key idea is that $b_n$ will be a solution $b$ of 

\begin{equation}
\label{e.b}
e^{b \, d^2_{n+1}} a_{n+1} = \sum_{k=1}^n e^{b \, d^2_{k}}a_k
\end{equation}
so that $e^{b_nt^2} d\rho$ will give equal mass 
to $\{d_{n+1}\}$ and $(0, d_n]$. Because we will also 
require $d_{k+1} / d_k \to \infty$ (say, $d_k = e^{k^2}$), this will lead to the 
limit $\mu$ of \eqref{e.mu}. The inductive choice of $a_{n+1}$ will imply that 
there is a unique smallest postive solution $b=B(a_{n+1}; a_1, \cdots, a_n)$ of \eqref{e.b}.

We now construct $a_n$ inductively. Choose $a_{n+1}$ as any $a>0$ (say, the largest) 
satisfying the following three inequalities (with the first two only for $n\ge 2$):
\begin{equation}
\label{induction1}
e^{b_{n-1}d^2_{n+1}} a_{n+1} \leq (1/(n+1)) e^{b_{n-1}d^2_{n}} a_{n},
\end{equation}
\begin{equation}
\label{induction2}
B(a_{n+1}; a_1, \cdots, a_n) \ge b_{n-1},
\end{equation}
\begin{equation}
\label{induction3}
B(a_{n+1}; a_1, \cdots, a_n) \ge n+ 1 \, .
\end{equation}
Then, as previously indicated, we choose $b_n=B(a_{n+1}; a_1, \cdots, a_n)$. This completes our sketch 
except to note that the $n+1$ appearing on the right hand sides of (\ref{induction1}) and (\ref{induction3})
could be replaced by any sequence $\theta_n$ with $\theta_n >1$ and $\theta_n \nearrow \infty$.

\subsubsection
{${\cal T}_\rho = (-\infty, b_0)$ with $b_0 \in (0, \infty)$.}

\begin{case}
\label{middlecase}
${\cal P}_\rho =(-\infty,b_{0})$.
\end{case}

Here one can take the Gaussian measure $d\rho \left( s\right) /ds \, =\left(
b_{0}/\pi \right)^{1/2}
e^{-b_{0}s^{2}}$.

\begin{case}
There exists $\Lambda _{0} \in (-\infty, b_0)$ such that 
${\cal P}_\rho =[\Lambda 
_{0},b_{0})$.
\end{case}

Take $b_{0} \in (0, \infty) $ large and let $d\rho ( t) :=\frac{1%
}{5} ( 3\delta ( t) +\delta ( t-1) +\delta (
t+1) ) dt$. Also take $d\nu _{b_{0}}( t) =(
b_{0}/\pi ) ^{-1/2}e^{-b_{0}t^{2}}dt$ and let $\rho =\mu \ast \nu
_{b_{0}}\text{.}$  Then
\[
d\rho \left( x\right) =
K \, e^{-b_{0}x^{2}}\left[
3+e^{-b_{0}}e^{2b_{0}x}+e^{-b_{0}}e^{-2b_{0}x}\right] dx
\]%
is such that $\int e^{bx^{2}}\,d\rho <\infty $ if and only if $b<b_{0}$.
By an explicit computation, 
\begin{multline*}
\int e^{izx}e^{bx^{2}}\,d\rho
=
K \,
e^{-\frac{z^{2}}{2}\frac{1}{2\left( b_{0}-b\right) }} \times \\
\left[ 3+e^{-\left(
b_{0}-\frac{b_{0}^{2}}{b_{0}-b}\right) }e^{i\frac{b_{0}}{b_{0}-b}%
z} +e^{-\left( b_{0}-\frac{b_{0}^{2}}{b_{0}-b}\right) }e^{ -i%
\frac{b_{0}}{b_{0}-b}z} \right] .
\end{multline*}%
Let $y=e^{i\frac{b_{0}}{b_{0}-b}z} $. It is not hard to check
that the roots of the equation $%
e^{-\left( b_{0}-\frac{b_{0}^{2}}{b_{0}-b}\right) }\left( y+y^{-1}\right)
+3=0$ all satisfy $\left\vert y\right\vert =1$ if and only
if $ e^{-\left( b_{0}-\frac{b_{0}^{2}}{b_{0}-b}\right)
} \geq 3/2$, i.e., $b\geq \Lambda _{0}$ for some $\Lambda
_{0}=\Lambda _{0}\left( b_{0}\right) >0$. Therefore 
\begin{equation*}
\left\{ b:\int
e^{izx}e^{bx^{2}}\,d\rho \text{ has only real zeros}\right\} =[\Lambda
_{0},b_{0}).
\end{equation*}

\begin{case}
${\cal P}_\rho
=\emptyset  \, .$
\end{case}

Let 
\[
d\rho \left( x\right) = 
K \, e^{-x^{2}}\left(
1+x\right) dx.
\]
Then $e^{bx^{2}}\,d\rho $ is integrable  on
$(-\infty, \infty)$ if and only if $b<1$ so $b_0 =1$. By an explicit
computation, for any $b<1$ and letting $\alpha =1-b$, we have 
\begin{eqnarray*}
\int_{-\infty }^{\infty }e^{izx}e^{bx^{2}}\,d\rho \left( x\right)  &=&\left(
1-i\frac{d}{dz}\right) \int_{-\infty }^{\infty }e^{izx}e^{- 2 (1-b)x^{2}/2}\,d x  \\
&=& C(\alpha) \left( 1+i\frac{z}{2\alpha }\right) e^{-z^{2}/4\alpha }.
\end{eqnarray*}%
Thus $z=2\alpha i$ is always a non-real zero. This finishes the verification
for this particular example.

\subsubsection
{${\cal T}_\rho = (-\infty, b_0]$ with $b_0 \in [0, \infty)$.}

\begin{case}
\label{nexttolastcase}
It is not possible to have ${\cal P}_\rho = (-\infty,b_0]$
or $[\Lambda_0, b_0]$ with $\Lambda_0 \in (-\infty, b_0)$.
\end{case}

We rule out the possibility of such probability distributions by applying the
weak convergence result, Theorem~\ref{weak}. Indeed, suppose $\rho $ is a
probability distribution that satisfies the above conditions. Then for any $\lambda
\in (-\infty, b_0]$ or $ \in \lbrack \Lambda _{0},b_{0}]$, the normalized measure 
\[
d\mu _{\lambda }=\frac{e^{\lambda x^{2}}d\rho \left( x\right) }{%
\int_{-\infty }^{\infty }e^{\lambda x^{2}}d\rho \left( x\right) } 
\]
satisfies all three conditions of Definition \ref{LYp}. Let $\lambda
_{n}=b_{0}-1/n$, so that $\mu _{\lambda _{n}}\in \mathcal{X}$. Applying Theorem %
\ref{weak},  we conclude that as $n\rightarrow \infty $, $\mu _{b_{0}}\in 
\mathcal{X}$. But this would contradict our assumptions 
since Property~2 of Definition~\ref{LYp}
would then imply that 
$\int_{-\infty }^{\infty }e^{bx^{2}}d\rho \left( x\right) <\infty $
for some $b>b_{0}$.

\begin{case}
${\cal P}_\rho = \{ b_0 \}$.
\end{case}

We give an example with 
$b_0 = 0$. Let $Y$ and $Y^{\prime }$ be
i.i.d. Poisson random variables with mean $1/2$, so that $\mathbb{E}\left[
e^{zY}\right] =e^{\frac{1}{2}\left( e^{z}-1\right) }$. Let $W=Y-Y^{\prime }$;
then $W$ is a symmetric random variable with $\mathbb{E}\left[ e^{zW}%
\right] =\exp \left( \cosh z-1\right) $. We denote by $d\mu _{W}$ its
probability distribution.
Take a random variable $X$ with distribution $\frac{1+x^{2}}{2}d\mu _{W}$; then 
\begin{eqnarray*}
\mathbb{E}\left[ e^{zX}\right] &=&\frac{1}{2}\int e^{zx}\left(
1+x^{2}\right) d\mu _{W}\left( x\right) =\frac{1}{2}\left( 1+\frac{d^{2}}{%
dz^{2}}\right) \mathbb{E}\left[ e^{zW}\right] \\
&=&\frac{1}{2}\cosh z\cdot \left( 1+\cosh z\right) \exp \left( \cosh
z-1\right) ,
\end{eqnarray*}%
whose zeroes are $\pm i\pi /2,\pm i3\pi /2,...$ and $\pm i\pi ,\pm i3\pi
,... $ which are all pure imaginary. The fact that for any $b <0$, some
zeros of $H_\rho (z)$ are not purely real follows as in Case~7.

We note that one can obtain an example of Case~8 with $b_{0}>0$ by
replacing the above distribution $d \mu_X$ with 
$\text{Const.} \exp{(-b_0 x^2)} d \mu_X$.

\begin{case}
\label{lastcase}
${\cal P}_\rho =\emptyset .$
\end{case}

Examples of such probability distributions have been given in the discussion
after Proposition \ref{noPIZ}.

\paragraph{\textbf{Acknowledgments:}}

The research reported here was supported in part by U.S. NSF grant
DMS-1507019. We thank Ivan Corwin for
the invitation to submit this paper to Bulletin of the AMS and for
comments on an earlier draft. We also thank Louis-Pierre Arguin and two anonymous reviewers
for their detailed comments and suggestions which have been incorporated
into the current draft.

\bibliographystyle{alpha}
\bibliography{DBN}

\begin{thebibliography}{CNV87}

\bibitem[ABB19]{ABB}
L.-P. Arguin, D. Belius, P. Bourgade, M. Radziwiłł, K. Soundararajan. 
\newblock Maximum of the Riemann zeta function on a short interval of the critical line. 
\newblock {\em Communications on Pure and Applied Mathematics}, 72(3):500--535, 2019.

\bibitem[AOR19]{AOR}
L.-P. Arguin, F. Ouimet, M. Radziwiłł. 
\newblock Moments of the Riemann zeta function on short intervals of the critical line.
\newblock {\em arXiv preprint, arXiv:1901.04061,} 2019.

\bibitem[BK13]{BK}
 P. Bourgade and J. Keating.
\newblock Quantum Chaos, Random Matrix Theory, and the Riemann $\zeta$-function.
\newblock pp.~125--168 in B. Duplantier, S. Nonnenmacher
and V. Rivasseau, editors, {\em Chaos: Poincar{\'e} Seminar 2010}, Springer Basel, 2013.

\bibitem[CGG85]{CGG}
J. B. Conrey, A. Ghosh, D. Goldston, S. M. Gonek, D. R. Heath-Brown.
\newblock On the
distribution of gaps between zeros of the zeta-function.
\newblock {\em Quarterly Journal of Mathematics}, 36:43--51, 1985.

\bibitem[CNV88]{CNV}
George Csordas, Timothy~S. Norfolk, and Richard~S. Varga.
\newblock A lower bound for the de {B}ruijn-{N}ewman constant $\Lambda$.
\newblock {\em Numerische Mathematik}, 52(5):483--497, 1988.

\bibitem[COSV93]{COSV}
G. Csordas, A. M. Odlyzko, W. Smith, and R. S. Varga.
\newblock A new {L}ehmer pair of zeros and a new lower bound for the de {B}ruijn-{N}ewman constant $\Lambda$.
\newblock {\em Electronic Transactions on Numerical Analysis.}, 1:104--111, 1993.


\bibitem[CRV91]{CRV}
G. Csordas, A. Ruttan, and R. S. Varga.
\newblock The {L}aguerre inequalities with applications to a problem associated with the {R}iemann hypothesis.
\newblock {\em Numerical Algorithms}, 1(2):305--329, 1991.

\bibitem[CSV94]{CSV}
George Csordas, Wayne Smith, and Richard~S. Varga.
\newblock {L}ehmer pairs of zeros, the de {B}ruijn-{N}ewman constant $\Lambda$,
  and the {R}iemann {H}ypothesis.
\newblock {\em Constructive Approximation}, 10(1):107--129, 1994.

\bibitem[DB50]{DB}
N. G. ~de~Bruijn.
\newblock The roots of trigonometric integrals.
\newblock {\em Duke Math. J}, 17(3):197--226, 1950.

\bibitem[DuN75]{DN}
Fran{\c{c}}ois Dunlop and Charles~M. Newman.
\newblock Multicomponent field theories and classical rotators.
\newblock {\em Communications in Mathematical Physics}, 44(3):223--235, 1975.

\bibitem[ESY11]{ESY}
 L. Erdos, B. Schlein and H.-T. Yau. 
\newblock Universality of random matrices and local relaxation flow.
\newblock {\em Invent. Math.}, 185(1):75--119, 2011.

\bibitem[FFS92]{FFS}
 R. Fern\'andez, J. Fr\"ohlich and A. D. Sokal.
\newblock Random Walks, Critical Phenomena, and Triviality in Quantum Field Theory. 
\newblock  Springer-Verlag, 1992.

\bibitem[FK14]{FK}
 Y. V. Fyodorov and J. P. Keating. 
\newblock Freezing transitions and extreme values: random matrix theory, $\zeta(\frac{1}{2} +
i t)$
and disordered landscapes. 
\newblock {\em Philosophical Transactions of the Royal Society A}, 372:1--32, 2014

\bibitem[GO74]{GO}
Anatoly Goldberg and Iossif Ostrovskii.
\newblock On the growth of entire ridge functions.
\newblock {\em Math. Physics and Functional Analysis}, Akad. Nauk Ukr. SSR, Fiz. Tehn. Inst. Nizkih Temperatur, Kharkov 5:3-10, 1974.


\bibitem[KKL09]{KKL}
Haseo Ki, Young-One Kim, and Jungseob Lee.
\newblock On the de {B}ruijn--{N}ewman constant.
\newblock {\em Advances in Mathematics}, 222(1):281--306, 2009.

\bibitem[Leh56]{Le}
Derrick~H. Lehmer.
\newblock On the roots of the {R}iemann zeta-function.
\newblock {\em Acta Mathematica}, 95(1):291--298, 1956.

\bibitem[LS81]{LS}
Elliott~H. Lieb and Alan~D Sokal.
\newblock A general {L}ee-{Y}ang theorem for one-component and multicomponent
  ferromagnets.
\newblock {\em Communications in Mathematical Physics}, 80(2):153--179, 1981.

\bibitem[LY52]{LY2}
Tsung-Dao Lee and Chen-Ning Yang.
\newblock Statistical theory of equations of state and phase transitions. {II}.
  {L}attice gas and {I}sing model.
\newblock {\em Physical Review}, 87(3):410, 1952.

\bibitem[Mon73]{Mon}
Hugh~L. Montgomery.
\newblock The pair correlation of zeros of the zeta function.
\newblock In {\em Proc. Symp. Pure Math}, volume~24, pages 181--193, 1973.

\bibitem[Naj18]{Naj}
J. Najnudel. 
\newblock On the extreme values of the {R}iemann zeta function on random intervals of the critical line,
\newblock {\em Probability Theory and Related Field}, 172(1):387--452, 2018.

\bibitem[Ne74]{New2}
Charles~M. Newman.
\newblock Zeros of the partition function for generalized {I}sing systems.
\newblock {\em Communications on Pure and Applied Mathematics}, 27(2):143--159,
  1974.

\bibitem[Ne76]{New1}
Charles~M. Newman.
\newblock Fourier transforms with only real zeros.
\newblock {\em Proceedings of the American Mathematical Society},
  61(2):245--251, 1976.

\bibitem[NeW17]{NW}
Charles~M. Newman and Wei Wu.
\newblock Lee-{Y}ang property and {G}aussian multiplicative chaos. To appear, 
\newblock {\em Communications in Mathematical Physics}, 2018.

\bibitem[NRV92]{NRV}
T. S. Norfolk, A. Ruttan, and R.~S. Varga.
\newblock A lower bound for the de {B}ruijn-{N}ewman constant $\Lambda.\,$ II.
\newblock pp.~403--418 in A. A. Gonchar
and E. B. Saff, editors, {\em Progress in Approximation Theory: An International Perspective}, 
Springer-Verlag, 1992. 

\bibitem[O00]{Od}
 A. M. Odlyzko.
\newblock An improved bound for the de {B}ruijn-{N}ewman constant.
\newblock {\em Numerical Algorithms.}, 25:293--303, 2000.

\bibitem[P26]{Pol3} 
George P{\'o}lya.
\newblock On the zeros of certain trigonometric integrals.
\newblock {\em J. London Math. Soc}, 1:98--99, 1926.

\bibitem[P27]{Pol2}
George P{\'o}lya.
\newblock {\"U}ber trigonometrische integrale mit nur reellen nullstellen.
\newblock {\em J. Reine Angew. Math}, 158(19271):6--18, 1927.

\bibitem[PSc14]{PS}
G. P{\'o}lya and J.~Schur.
\newblock {\"U}ber zwei arten von faktorenfolgen in der theorie der
  algebraischen gleichungen.
\newblock {\em Journal f{\"u}r die reine und angewandte Mathematik},
  144:89--113, 1914.
  
\bibitem[R91]{Riel}
H. J. J. te Riele.
\newblock A new lower bound for the de {B}ruijn-{N}ewman constant.
\newblock {\em Numer. Math.},
  58:661--667, 1991.

\bibitem[Rie59]{Rie}
Bernhard Riemann.
\newblock Ueber die anzahl der primzahlen unter einer gegebenen grosse.
\newblock {\em Ges. Math. Werke und Wissenschaftlicher Nachla{\ss}},
  2:145--155, 1859.

\bibitem[RoT18]{RT}
Brad Rodgers and Terence Tao.
\newblock The de {B}ruijn-{N}ewman constant is non-negative.
\newblock {\em arXiv preprint arXiv:1801.05914}, 2018.

\bibitem[SGD11]{SGD}
Yannick Saouter, Xavier Gourdon, and Patrick Demichel.
\newblock An improved lower bound for the de {B}ruijn-{N}ewman constant.
\newblock {\em Mathematics of Computation}, 80(276):2281--2287, 2011.

\bibitem[SiG73]{SG}
Barry Simon and Robert~B. Griffiths.
\newblock The $(\varphi^4)_2$ field theory as a classical {I}sing model.
\newblock {\em Communications in Mathematical Physics}, 33(2):145--164, 1973.

\bibitem[T18]{Poly15}
 Terence Tao et. al.  
 \newblock  Upper bounding the de {B}ruijn-{N}ewman constant, 
 \newblock available at {\url{http://michaelnielsen.org/polymath1/index.php?}\newblock  \url{title=De_Bruijn-Newman_constant}} (2018)

\bibitem[YL52]{LY1}
Chen-Ning Yang and Tsung-Dao Lee.
\newblock Statistical theory of equations of state and phase transitions. {I}.
  {T}heory of condensation.
\newblock {\em Physical Review}, 87(3):404, 1952.

\end{thebibliography}

\end{document}